\definecolor{uuuuuu}{rgb}{0.26666666666666666,0.26666666666666666,0.26666666666666666}
\definecolor{xdxdff}{rgb}{0.49019607843137253,0.49019607843137253,1.}
\definecolor{ffqqqq}{rgb}{1.,0.,0.}
\newtheorem{lemma}[subsection]{Lemma}
\newtheorem{hypothesis}[subsection]{Hypothesis}
\newtheorem{prop}[subsection]{Proposition}
\newtheorem{theo}[subsection]{Theorem}
\theoremstyle{definition}
\newtheorem{remark}[subsection]{Remark}
\newtheorem{definition}[subsection]{Definition}
\newtheorem{example}[subsection]{Example}
\theoremstyle{remark}
\newtheorem{proposition}[subsection]{Proposition}
\numberwithin{equation}{section}
\newcommand{\set}[1]{\{#1\}}
\newcommand{\gn}{\nu}
\newcommand{\gs}{\sigma}
\newcommand{\gx}{\xi}
\newcommand{\tit}{\textit}
\newcommand{\C}[1]{\mathcal{#1}}
\newcommand{\D}[1]{\mathbb{#1}}
\newcommand{\te}{\text}
\newcommand{\ti}{\times}
\newcommand{\ep}{\epsilon}
\begin{document}

\title{A study on Quantization Dimension in complete metric spaces}

\author{Mrinal K. Roychowdhury}
\address{School of Mathematical and Statistical Sciences, University of Texas Rio Grande Valley, 1201 West University Drive, Edinburg, TX 78539-2999, USA}
\email{mrinal.roychowdhury@utrgv.edu}

\author{S. Verma}
\address{Department of Mathematics, IIT Delhi, New Delhi, India 110016 }
\email{saurabh331146@gmail.com}






\subjclass[2010]{Primary 28A80; Secondary  37A50, 94A15, 60D05.}
\keywords{Quantization dimension, infinitesimal similitude, IFS}

\begin{abstract}
The primary objective of the present paper is to develop the theory of quantization dimension of an invariant measure associated with an iterated function system consisting of finite number of contractive infinitesimal similitudes in a complete metric space. This generalizes the known results on quantization dimension of self-similar measures in the Euclidean space to a complete metric space. In the last part, continuity of quantization dimension is discussed.
\end{abstract}

\maketitle


.

\section{INTRODUCTION}

Let $(X,d)$ be a complete metric space. Given a Borel probability measure $\mu $ on $X$, a number $r \in (0, +\infty)$ and $ n \in \mathbb{N}$, the $n$th quantization error of order $r$ for $\mu $ is defined by $$V_{n,r}(\mu):=\inf \Big\{\int d(x, A)^r d\mu(x): A \subset X, \, \text{Card}(A) \le n\Big\},$$
where $d(x, A)$ represents the distance of the point $x$ from the set $A$. We define the \textit{quantization dimension} of order $r$ of $\mu $ by $$D_r=D_r(\mu):= \lim_{n \to \infty} \frac{r \log n}{- \log \big(V_{n,r}(\mu)\big)},$$
if the limit exists.
If the limit does not exist, then we define the \tit{lower} and \tit{upper quantization dimensions} by taking limit inferior and limit superior of the sequence, respectively. For $s > 0$, the two numbers
\[\liminf_{n \to \infty} nV^{s/r}(\mu), \te{ and } \limsup_{n \to \infty} nV^{s/r}(\mu)\] are, respectively, called the $s$-dimensional \tit{lower} and \tit{upper quantization coefficients} of order $r$ for $\mu$.
Let $\{\mathbb{R}^m; f_1,f_2, \dots, f_N\}$ be an Iterated Function System(IFS) such that each $f_i $ is a contractive similarity transformation with similarity ratio $c_i.$ Then, by a result of Hutchinson \cite{JH}, for a given probability vector $(p_1,p_2, \dots, p_N)$ there is a unique Borel probability measure $\mu $ satisfying the condition $$ \mu = \sum_{i=1}^{N} p_i  \mu \circ f_i^{-1},$$
and a unique nonempty compact set $E$ such that $$E = \cup_{i=1}^N f_i(E).$$ We call the measure $\mu $ an \tit{invariant measure} associated with the IFS, and the set $E$ the \tit{attractor} or the \tit{limit set} of the given IFS.
An IFS is said to satisfy the \tit{open set condition} (OSC) if there exists a bounded nonempty open set $G$ of $\mathbb{R}^m$ such that $ \cup_{i=1}^N f_i(G) \subset G$ and $ f_i(G) \cap f_j(G) = \emptyset$ for $1 \le i \ne j \le N$, and the \tit{strong OSC} if it also satisfies $E \cap G \ne \emptyset$, where $E$ is the attractor of the IFS. Graf and Luschgy \cite{GL1,GL2} proved that the quantization dimension function $D_r$ of the self-similar measure $\mu $ exists, and satisfies the following equation: $$\sum_{i=1}^N (p_i c_i^r)^{\frac{D_r}{r +D_r}}=1$$
provided that given IFS satisfies the OSC.
\par
Lindsay and Mauldin \cite{LM} generalized the above result to the $F$-conformal measure. They also posed an open question whether or not the lower quantization coefficient is positive, that is,\[\liminf_{n \to \infty} nV^{D_r/r}(\mu) >0,\] where $D_r$ denotes the quantization dimension of the $F$-conformal measure $\mu.$  Zhu settled this question positively \cite{Zhu}. The first author also approached the problem by a different technique \cite{R6}. There has been a large number of papers on quantization dimension by several authors, see, for instance, \cite{GL1,GL2, MR, KPot, R1, R2, R3, R4, R5,  Zador}. We cite the book \cite{GL1} for a comprehensive treatment of this subject. The book \cite{Fal} by Falconer is a good reference for topics related to dimension theory of sets.
\par
In the literature, we find few papers on dimension theory in a general complete metric space. In particular, Schief \cite{Schief} made an attempt to study the dimension theory of self-similar sets in a general complete metric space. Note that the OSC and SOSC are equivalent for a conformal IFS \cite{Peres} and a self-similar IFS \cite{Schief1} on the Euclidean domain. In \cite{Schief}, Schief also proved that the aforesaid statement is not true for a self-similar IFS in a complete metric space. This conveys that we must need some careful observation to develop the dimension theory in a complete metric space. Further, Nussbaum, Priyadarshi and Lunel \cite{Nussbaum1} introduced a notion of infinitesimal similitudes which generalizes the notion of similitudes on a complete metric space, and the notion of conformal mappings on the Euclidean space to a complete metric space. To be precise, they established a formula of the Hausdorff dimension of the limit set (attractor) associated with generalized graph-directed construction consisting of a finite set of contractive infinitesimal similitudes. Motivated and Influenced by the previous papers, we are interested to extend the theory of quantization dimension in a complete metric space. Apart from providing the motivation for this paper, the works reported in \cite{Nussbaum1,R2} also offered us an array of basic techniques which we have modified and adapted. To the best of our knowledge, the continuity of quantization dimension is not discussed so far in the literature. In this paper, we also focus on this issue.
\par
The content of the article is as follows.
The first part of the present paper studies quantization dimension of an invariant measure generated by a finite IFS containing contractive infinitesimal similitudes which are defined on a complete metric space.
The second part of the paper provides some results and examples regarding continuity of quantization dimension.
Thus, our findings in this paper could serve as the first step towards the development of quantization dimension in a complete metric space.

\section{Preliminaries}
An infinitesimal similitude is a generalization of conformal map to the complete metric spaces, see, for instance, \cite{Nussbaum1}.
Let $(X,d)$ be a compact, perfect metric space. A mapping $f: X \rightarrow X $ is said to be an \tit{infinitesimal similitude} at $x \in X$ if for any sequences $(x_m)$ and $(y_m)$ with $x_m \ne y_m$ for $m \in \mathbb{N}$ and $x_m \to x , y_m \to x,$ the limit $$ \lim_{m \to \infty}\frac{d(f(x_m),f(y_m))}{d(x_m,y_m)}=: (Df)(x)$$
exists and is independent of the particular sequences $(x_m)$ and $(y_m)$. We shall say that $f$ is an infinitesimal similitude on $X$ if $f$ is an infinitesimal similitude at $x$ for all $x \in X.$
\begin{lemma}[\cite{Nussbaum1}, Lemma $4.1$]\label{conti}
If $f: X \rightarrow Y$ is an infinitesimal similitude, then $x \to (Df)(x)$ is continuous.
\end{lemma}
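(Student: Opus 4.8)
The statement is an assertion of sequential continuity of the map $x \mapsto (Df)(x)$, and since $X$ is a metric space this is equivalent to continuity; so the plan is to fix an arbitrary convergent sequence $x_n \to x$ in $X$ and prove $(Df)(x_n) \to (Df)(x)$. The mechanism I would use is a diagonal argument: replace the (a priori infinitely many) sequences defining each $(Df)(x_n)$ by a single well-chosen pair of nearby distinct points, and then feed the resulting two sequences of points into the definition of $(Df)(x)$ at the limit point.

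Concretely, for each fixed $n$ I would unwind the definition of $(Df)(x_n)$. Since $X$ is perfect, $x_n$ is not isolated, so there exist sequences $(u_m)_m$, $(v_m)_m$ in $X$ with $u_m \ne v_m$, $u_m \to x_n$ and $v_m \to x_n$; by the defining property of an infinitesimal similitude at $x_n$, the ratios $d(f(u_m),f(v_m))/d(u_m,v_m)$ converge to $(Df)(x_n)$. Choosing $m$ large enough, I obtain points $a_n := u_m$ and $b_n := v_m$ with $a_n \ne b_n$, with $d(a_n,x_n) < 1/n$ and $d(b_n,x_n) < 1/n$, and with
\[
\Bigl| \frac{d(f(a_n),f(b_n))}{d(a_n,b_n)} - (Df)(x_n) \Bigr| < \frac1n .
\]
The triangle inequality then gives $d(a_n,x) \le d(a_n,x_n) + d(x_n,x) \to 0$, and likewise $d(b_n,x) \to 0$, so $(a_n)$ and $(b_n)$ are sequences converging to $x$ with $a_n \ne b_n$ for every $n$. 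Applying the definition of infinitesimal similitude at the point $x$ to these two sequences yields $d(f(a_n),f(b_n))/d(a_n,b_n) \to (Df)(x)$; combining this with the displayed inequality gives $(Df)(x_n) \to (Df)(x)$, as required.

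The argument is short, and the only step that needs genuine care is the extraction in the middle: from the defining data of each $(Df)(x_n)$ one must pull out a single pair $(a_n,b_n)$ that is simultaneously close to $x_n$ and accurate for the ratio. This is precisely where perfectness of $X$ is needed — it guarantees the defining sequences exist at each $x_n$ — and it is where the independence-of-sequences clause in the definition does the real work, since it is exactly what allows the reassembled sequences $(a_n)$ and $(b_n)$ to be used as legitimate test sequences at $x$. No compactness and no further properties of $f$ beyond its being an infinitesimal similitude at every point are required.
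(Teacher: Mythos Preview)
Your argument is correct: the diagonal extraction of a pair $(a_n,b_n)$ from the defining sequences at each $x_n$, followed by feeding the reassembled sequences into the definition at $x$, is exactly the right mechanism, and you have correctly identified where perfectness of $X$ and the independence-of-sequences clause are used. The paper itself does not supply a proof of this lemma --- it is quoted directly from \cite{Nussbaum1}, Lemma~4.1 --- so there is no in-paper argument to compare against; your proof is in fact the standard one, and matches the original reference.
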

\begin{lemma}[\cite{Nussbaum1}, Lemma $4.2$] \label{Chain_lemma}
Let $f: X \rightarrow Y$ and $h: Y \rightarrow Z$ be given. If $f$ is an infinitesimal similitude at $x \in X$ and $h$ is an infinitesimal similitude at $f(x) \in Y$, then $hof$ is an infinitesimal similitude at $x \in X$ and $$ (D(hof))(x)= (Dh)(f(x))(Df)(x).$$
\end{lemma}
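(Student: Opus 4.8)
The plan is to prove the chain rule by factoring the difference quotient for $h\circ f$ through the intermediate point $f(x)$, exactly as in the classical chain rule; the only genuine subtlety is that $f$ need not be injective, so $d(f(x_m),f(y_m))$ may vanish and the factorization must be handled with a little care.

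First I would record a preliminary observation: since the limit defining $(Df)(x)$ is assumed to exist (in particular, to be a finite real number), $f$ is continuous at $x$. Indeed, if not, there would be a sequence $x_m\to x$ with $x_m\ne x$ and $d(f(x_m),f(x))\ge\ep$ for some fixed $\ep>0$; taking $y_m\equiv x$, the quotient $d(f(x_m),f(y_m))/d(x_m,y_m)$ would then be unbounded, contradicting its convergence to $(Df)(x)$. The same argument shows $h$ is continuous at $f(x)$.

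Next, fix arbitrary sequences $(x_m)$ and $(y_m)$ in $X$ with $x_m\ne y_m$, $x_m\to x$ and $y_m\to x$. By continuity, $f(x_m)\to f(x)$ and $f(y_m)\to f(x)$. Split the indices into $I=\setm{m}{f(x_m)=f(y_m)}$ and $J=\setm{m}{f(x_m)\ne f(y_m)}$. On $J$ write
$$\frac{d(h(f(x_m)),h(f(y_m)))}{d(x_m,y_m)}=\frac{d(h(f(x_m)),h(f(y_m)))}{d(f(x_m),f(y_m))}\cdot\frac{d(f(x_m),f(y_m))}{d(x_m,y_m)};$$
if $J$ is infinite, applying the infinitesimal-similitude property of $h$ at $f(x)$ to the sequences $(f(x_m))_{m\in J}$ and $(f(y_m))_{m\in J}$ shows the first factor tends to $(Dh)(f(x))$, while the second tends to $(Df)(x)$ by definition, so the product tends to $(Dh)(f(x))(Df)(x)$. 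On $I$ the quotient for $h\circ f$ is identically $0$; moreover, if $I$ is infinite, then $d(f(x_m),f(y_m))/d(x_m,y_m)=0$ along $I$, and since this is a subsequence of a sequence converging to $(Df)(x)$, we get $(Df)(x)=0$, hence $(Dh)(f(x))(Df)(x)=0$.

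Combining the two cases, in every configuration the full sequence $d(h(f(x_m)),h(f(y_m)))/d(x_m,y_m)$ converges to $(Dh)(f(x))(Df)(x)$: if $(Df)(x)=0$, then both the $I$-part (which is $0$) and the $J$-part (first factor bounded, second factor $\to0$) tend to $0$; if $(Df)(x)\ne0$, then $I$ is finite by the above, so eventually every index lies in $J$ and the quotient tends to $(Dh)(f(x))(Df)(x)$. Since this limit does not depend on the chosen sequences, $h\circ f$ is an infinitesimal similitude at $x$ and $(D(h\circ f))(x)=(Dh)(f(x))(Df)(x)$. The only step needing attention is the bookkeeping around the possible non-injectivity of $f$ encoded in the set $I$, which forces $(Df)(x)=0$ and thereby keeps the product formula valid; the rest is the routine two-factor limit argument.
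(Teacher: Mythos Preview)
Your argument is correct. The paper does not supply its own proof of this lemma; it is quoted from \cite{Nussbaum1} without argument, so there is nothing to compare against beyond noting that your factorization through $f(x)$ is exactly the expected strategy, and your handling of the possible non-injectivity of $f$ via the $I/J$ split is the right way to make the two-factor limit rigorous.
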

\begin{remark}
Let $f:X \to X$ be a similitude. That is, for some fixed constant $c>0,$ $d(f(x),f(y)) =c~d(x,y)$ for every $x,y \in X.$ Then, for any $x \in X,$ and sequences $(x_m),~(y_m)$ with $x_m \ne y_m$ in $X$, we have
\[
(Df)(x)=\lim_{m \to \infty} \frac{d(f(x_m),f(y_m))}{d(x_m,y_m)}=\lim_{m \to \infty} \frac{c~d(x_m,y_m)}{d(x_m,y_m)}= c.
\]
Hence, every similitude is an infinitesimal similitude. Furthermore, if $f$ is also a Lipschitz map with Lipschitz constant $\te{Lip}(f)$, then
\[
(Df)(x)=\lim_{m \to \infty} \frac{d(f(x_m),f(y_m))}{d(x_m,y_m)}\le \lim_{m \to \infty} \frac{\te{Lip}(f)~d(x_m,y_m)}{d(x_m,y_m)}= \te{Lip}(f),
\]
for all $x \in X.$
\end{remark}
\begin{example}
Let $f: \mathbb{R} \to \mathbb{R}$ be a mapping defined by $f(x)= |x|$. Observe that $|f(x)-f(y)| \le |x-y|,~ \forall~ x,y \in \mathbb{R}$, and $f$ is not differentiable at $x=0.$ By taking sequences $x_n= \frac{1}{n},~ y_n =-\frac{1}{n},$ we get $ \lim_{n \to \infty} \frac{\big||x_n| -|y_n|\big|}{|x_n -y_n|}=0.$ For sequences $x_n= \frac{1}{n},~ y_n = 0$, we get $ \lim_{n \to \infty} \frac{\big||x_n| -|y_n|\big|}{|x_n -y_n|}=1.$
Hence, $f$ is not an infinitesimal similitude at $x=0.$
\end{example}
\begin{example}
Define $f: \mathbb{R} \to \mathbb{R}$ by \begin{equation*} f(x) =
       \begin{cases}
     x^2 \sin(\frac{1}{x}),~~ \text{if}~~ x \ne 0 \\
          0 ,~~ \text{otherwise}.
      \end{cases}
   \end{equation*}
   Then, $f$ is differentiable on $\mathbb{R}.$ We have
   \begin{equation*} f'(x) =
          \begin{cases}
        2x \sin(\frac{1}{x})- \cos(\frac{1}{x}),~~ \text{if}~~ x \ne 0 \\
             0 ,~~ \text{otherwise}.
         \end{cases}
      \end{equation*}
    Note that $f'$ is not continuous at $x=0.$ Also, $f$ is not an infinitesimal similitude at $x=0.$
\end{example}
The upcoming theorem is of independent interest. However, it shows that notion of infinitesimal similitude is stronger than differentiability in some sense.
\begin{theo}
Let $f:\mathbb{R} \to \mathbb{R}$ be a differentiable function. Then, the $(Df)$ exists at $x_0$ if and only if the modulus of the derivative $|f'|:\mathbb{R} \to \mathbb{R}$ is continuous at $x_0$. In particular, if $(Df)$ exists, then $(Df)=|f'|.$
\end{theo}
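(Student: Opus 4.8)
The plan is to prove both implications by routing through the identity $(Df)(x_0)=|f'(x_0)|$, comparing the difference quotient $\tfrac{f(u)-f(v)}{u-v}$ with $f'$ via the Mean Value Theorem on one side and with the limit that defines $(Df)$ on the other.

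For sufficiency, suppose $|f'|$ is continuous at $x_0$ and let $(x_m),(y_m)$ be arbitrary sequences with $x_m\ne y_m$ and $x_m,y_m\to x_0$. The Mean Value Theorem applied on the closed interval with endpoints $x_m$ and $y_m$ yields a point $\xi_m$ strictly between them with $\tfrac{f(x_m)-f(y_m)}{x_m-y_m}=f'(\xi_m)$. Since $\xi_m$ lies between $x_m$ and $y_m$, a squeezing argument gives $\xi_m\to x_0$, hence $\tfrac{|f(x_m)-f(y_m)|}{|x_m-y_m|}=|f'(\xi_m)|\to|f'(x_0)|$ by continuity of $|f'|$ at $x_0$. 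As the limit does not depend on the chosen sequences, $(Df)(x_0)$ exists and equals $|f'(x_0)|$.

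For necessity, assume $(Df)(x_0)$ exists. First I pin down its value: using the admissible pair $y_m\equiv x_0$ and any $x_m\to x_0$ with $x_m\ne x_0$, differentiability gives $\tfrac{f(x_m)-f(x_0)}{x_m-x_0}\to f'(x_0)$, so $\tfrac{|f(x_m)-f(x_0)|}{|x_m-x_0|}\to|f'(x_0)|$, forcing $(Df)(x_0)=|f'(x_0)|$. I then show $|f'|$ is continuous at $x_0$ by contradiction: if not, fix $\ep>0$ and a sequence $z_n\to x_0$ with (necessarily) $z_n\ne x_0$ and $\big||f'(z_n)|-|f'(x_0)|\big|\ge\ep$. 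Differentiability of $f$ at each $z_n$ lets me pick $w_n$ with $0<|w_n-z_n|<\tfrac1n$ and $\big|\tfrac{f(w_n)-f(z_n)}{w_n-z_n}-f'(z_n)\big|<\tfrac{\ep}{2}$; automatically $w_n\to x_0$. Putting $a_n:=\tfrac{|f(z_n)-f(w_n)|}{|z_n-w_n|}$, the reverse triangle inequality gives $\big|a_n-|f'(x_0)|\big|\ge\big||f'(z_n)|-|f'(x_0)|\big|-\big|a_n-|f'(z_n)|\big|>\tfrac{\ep}{2}$ for all $n$, so $a_n\not\to|f'(x_0)|=(Df)(x_0)$. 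But $(z_n)$ and $(w_n)$ form an admissible pair tending to $x_0$, so $a_n\to(Df)(x_0)$ --- a contradiction. Hence $|f'|$ is continuous at $x_0$ and $(Df)(x_0)=|f'(x_0)|$.

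The routine ingredients --- applicability of the Mean Value Theorem on the interval spanned by $x_m$ and $y_m$, the squeezings $\xi_m\to x_0$ and $w_n\to x_0$, and the triangle--inequality estimate for $a_n$ --- are immediate. The one delicate point is the construction of the auxiliary sequence $(w_n)$ in the necessity part: it must be close enough to $z_n$ that $\tfrac{f(w_n)-f(z_n)}{w_n-z_n}$ retains the size of $|f'(z_n)|$, yet still converge to $x_0$ so that $(z_n),(w_n)$ is legitimate in the definition of $(Df)(x_0)$. Imposing $|w_n-z_n|<1/n$ secures both simultaneously, and this is precisely where differentiability of $f$ at points near $x_0$ --- not merely at $x_0$ --- enters.
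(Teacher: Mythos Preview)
Your proof is correct. The sufficiency direction (continuity of $|f'|$ at $x_0$ implies $(Df)(x_0)$ exists) is exactly the paper's argument: apply the Mean Value Theorem on the interval spanned by $x_m$ and $y_m$, squeeze $\xi_m\to x_0$, and use continuity of $|f'|$ at $x_0$.

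For the necessity direction the approaches diverge. The paper establishes $(Df)(x_0)=|f'(x_0)|$ using the pair $(x_n,x_0)$, as you do, but then simply invokes Lemma~\ref{conti} (continuity of $x\mapsto(Df)(x)$ for infinitesimal similitudes, from \cite{Nussbaum1}) to conclude that $|f'|$ is continuous at $x_0$. You instead give a self-contained contradiction argument, constructing the auxiliary sequence $(w_n)$ from differentiability at each $z_n$. Your route is more elementary in that it avoids appealing to an external lemma, and it is arguably tighter for the statement at hand: Lemma~\ref{conti} as quoted assumes $f$ is an infinitesimal similitude on the whole space, whereas the theorem only posits that $(Df)$ exists at the single point $x_0$; the paper's citation therefore implicitly uses a pointwise version of that lemma, and unpacking that version amounts to essentially the argument you wrote out.
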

\begin{proof}
Suppose $(Df)$ exists at $x_0$. Then, for $x_n \to x_0$, $x_n \ne x$, $$ (Df)(x_0)= \lim_{n \to \infty} \frac{|f(x_n)-f(x_0)|}{|x_n -x_0|} =\bigg|\lim_{n \to \infty} \frac{f(x_n)-f(x_0)}{x_n -x_0} \bigg|=|f'(x_0)|.$$
By Lemma \ref{conti}, $|f'| $ is continuous at $x_0.$ Now, suppose $|f'|:\mathbb{R} \to \mathbb{R}$ is continuous at $x_0$. Let $x_n \ne y_n$ such that $x_n \to x_0$ and $y_n \to x_0$. By the mean value theorem, $$\bigg|\frac{f(x_n)-f(y_n)}{x_n -y_n}\bigg|= |f'(t_n)|,$$ where either $t_n \in (x_n,y_n)$ or $t_n \in (y_n,x_n).$ Since $x_n \to x_0$ and $y_n \to x_0$, we get $t_n \to x_0$. Hence, due to continuity of $|f'|$ at $x_0,$ we have
\[
|f'(x_0)|= \bigg|\frac{f(x_n)-f(y_n)}{x_n -y_n}\bigg|= (Df)(x_0),
\]
that is, $(Df)(x_0)$ exists. Thus, the proof of the theorem is complete.
\end{proof}

Assume that for $ 1 \le i \le N,$ $ f_i: X \rightarrow X$ are contraction mappings with contraction coefficients $c_i.$ Then, there exists a unique, compact, non-empty set $E \subset X $ with $$ E = \cup_{i=1}^{N}f_i(E).$$ For a detailed study on infinitesimal similitudes and dimension of $E$, the reader is referred to \cite{Nussbaum1}.
 Again, by a result of Hutchinson \cite{JH}, for a given probability vector $(p_1,p_2, \dots, p_N)$ there exists a unique Borel probability measure $\mu $ satisfying the condition $$ \mu = \sum_{i=1}^{N} p_i  \mu \circ f_i^{-1}.$$
Assume that the mappings $f_i:X \rightarrow X$ are infinitesimal similitudes on $X$ and the mappings $x \to (Df_i)(x)$ are strictly positive H\"older continuous functions on $X$ for $1 \le i\le N.$ For $ \sigma \ge 0$ and $0<r< \infty,$ define $L_{\sigma,r}: \mathcal{C}(X) \rightarrow \mathcal{C}(X)$ by $$ (L_{\sigma,r}\Phi)(x)= \sum_{i=1}^{N}\big(p_i(Df_i)(x)^r\big)^{\frac{\sigma}{r + \sigma}} \Phi(f_i(x)).$$
From \cite[Theorem $3.6$]{Nussbaum1} and \cite[Lemma $4.8$]{Nussbaum1}, it follows that $L_{\sigma,r}$ has a strictly positive eigenvector $\Phi_{\sigma,r}$ with eigenvalue equal to the spectral radius $\text{spr}(L_{\sigma,r})$ of $L_{\sigma,r}.$ That is, $L_{\sigma,r}\Phi_{\sigma,r} = \text{spr}(L_{\sigma,r})\Phi_{\sigma,r}.$ Equivalently, we have
\begin{equation}\label{Qeqn0}
 \text{spr}(L_{\sigma,r})\Phi_{\sigma,r}(x)= \sum_{i=1}^{N}\big(p_i(Df_i)(x)^r\big)^{\frac{\sigma}{r + \sigma}} \Phi_{\sigma,r}(f_i(x)).
\end{equation}
\par
Let us define $u_{n,r}(\mu):=\inf \Big\{\int d(x, A \cup G^c)^r d\mu(x): A \subset X, \, \text{Card}(A) \le n \Big\},$ where $G$ is an open set satisfying the SOSC. Note that $$u_{n,r}(\mu)^{1/r} \le V_{n,r}(\mu)^{1/r}:= e_{n,r}(\mu).$$
Let us introduce some notation:
\begin{itemize}
\item $I:=\{1,2,\dots, N\}.$
\item $I^*:= \cup_{n \ge 1} I^n$.
\item For $w \in I^n, n \ge 1,$ define $f_w=f_{w_1}\circ f_{w_2} \circ \dots \circ f_{w_n}$ and $p_w=p_{w_1}p_{w_2}\dots p_{w_n}.$
\item  $|w|:=$the length of word $w$.
\item $T_w=\max \{(Df_w)(x): x \in X\}$ and $R_w=\min \{(Df_w)(x): x \in X\}.$
\item For $w \in I^n,$ that is, $w=w_1w_2\dots w_n$, we write $w^*=w_1w_2 \dots w_{n-1}.$
\item Let $w, \xi \in I^*.$ We use $w \prec \xi$ if there exists $\tau \in I^*$ such that $\xi = w \tau.$
\end{itemize}
\begin{definition}
Let $\Lambda $ be a finite subset of $I^* .$ Then, we call $\Lambda$ a finite maximal antichain if every sequence in $I^{\infty}$ is an extension of some word in $\Lambda$, but no word of $\Lambda$ is an extension of another word in $\Lambda.$
\end{definition}
\begin{definition}
We call sets $A_n \subset X$, for which the values $V_{n,r}(\mu),~u_{n,r}(\mu)$ and $e_{n,r}(\mu)$ are attained, $n$-optimal set for $V_{n,r}(\mu),~u_{n,r}(\mu)$ and $e_{n,r}(\mu)$, respectively.
\end{definition}
The next lemma is essentially \cite[Lemma $4.6$]{Nussbaum1}.
\begin{lemma}\label{QDstrdec}
For a fixed $0< r <+\infty,$ the map $\sigma \to \text{spr}(L_{\sigma,r})$ is continuous and strictly decreasing. Furthermore, there is a unique $\sigma_r \ge 0$ such that $\text{spr}(L_{\sigma_r,r})=1.$
\end{lemma}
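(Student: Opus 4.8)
The plan is to write $\lambda(\sigma):=\text{spr}(L_{\sigma,r})$ and to work with the two ingredients recalled just before the statement: $L_{\sigma,r}$ is a positive bounded operator on $\mathcal{C}(X)$, and it admits a strictly positive eigenfunction $\Phi_{\sigma,r}$ with eigenvalue $\lambda(\sigma)$, so in particular $\lambda(\sigma)>0$. The one elementary fact I will use repeatedly is the following: if $T$ is a positive bounded operator on $\mathcal{C}(X)$, $\Psi\in\mathcal{C}(X)$ is strictly positive, and $T\Psi\le\beta\Psi$ pointwise for some $\beta>0$, then $\text{spr}(T)\le\beta$. Indeed, positivity gives $T^{n}\Psi\le\beta^{n}\Psi$ by induction, hence $|T^{n}\Phi|\le T^{n}|\Phi|\le\big(\|\Phi\|_{\infty}/\min_{X}\Psi\big)T^{n}\Psi\le\big(\|\Phi\|_{\infty}\|\Psi\|_{\infty}/\min_{X}\Psi\big)\beta^{n}$ for every $\Phi\in\mathcal{C}(X)$, so $\|T^{n}\|\le\big(\|\Psi\|_{\infty}/\min_{X}\Psi\big)\beta^{n}$ and Gelfand's formula yields $\text{spr}(T)\le\beta$.

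Set $s=s(\sigma)=\sigma/(r+\sigma)$, which is continuous and strictly increasing on $[0,\infty)$ with $s(0)=0$ and $s(\sigma)\to1$ as $\sigma\to\infty$. Since each $f_{i}$ is a contraction with ratio $c_{i}<1$ and $(Df_{i})$ is strictly positive, we have $0<(Df_{i})(x)\le c_{i}<1$ for all $x$, so the numbers $a:=\max_{1\le i\le N}\max_{x\in X}p_{i}(Df_{i})(x)^{r}$ and $\rho:=\min_{1\le i\le N}\min_{x\in X}p_{i}(Df_{i})(x)^{r}$ satisfy $0<\rho\le a<1$. For $0\le\sigma_{1}<\sigma_{2}$ and $\Phi\ge0$, factoring $\big(p_{i}(Df_{i})(x)^{r}\big)^{s_{2}}=\big(p_{i}(Df_{i})(x)^{r}\big)^{s_{1}}\big(p_{i}(Df_{i})(x)^{r}\big)^{s_{2}-s_{1}}$ and using $\rho^{\,s_{2}-s_{1}}\le\big(p_{i}(Df_{i})(x)^{r}\big)^{s_{2}-s_{1}}\le a^{\,s_{2}-s_{1}}$ gives the pointwise comparison $\rho^{\,s_{2}-s_{1}}(L_{\sigma_{1},r}\Phi)\le L_{\sigma_{2},r}\Phi\le a^{\,s_{2}-s_{1}}(L_{\sigma_{1},r}\Phi)$. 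Applying the elementary fact to $T=L_{\sigma_{2},r}$ with $\Psi=\Phi_{\sigma_{1},r}$, and then to $T=L_{\sigma_{1},r}$ with $\Psi=\Phi_{\sigma_{2},r}$, I obtain
\[
\rho^{\,s(\sigma_{2})-s(\sigma_{1})}\lambda(\sigma_{1})\ \le\ \lambda(\sigma_{2})\ \le\ a^{\,s(\sigma_{2})-s(\sigma_{1})}\lambda(\sigma_{1}),\qquad 0\le\sigma_{1}<\sigma_{2}.
\]
The right-hand inequality together with $a<1$ shows that $\lambda$ is strictly decreasing; letting $\sigma_{2}\downarrow\sigma_{1}$ (and, for $\sigma_{1}>0$, $\sigma_{1}\uparrow\sigma_{2}$) both outer terms tend to $\lambda(\sigma_{1})$, which gives continuity of $\lambda$ on $[0,\infty)$.

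For the existence of $\sigma_{r}$: at $\sigma=0$ the exponent $s$ vanishes, so $L_{0,r}\mathbf{1}=N\mathbf{1}$, whence $\lambda(0)\ge N\ge1$. For large $\sigma$, the chain rule (Lemma \ref{Chain_lemma}) gives $T_{w}\le\prod_{j=1}^{|w|}c_{w_{j}}$ for $w\in I^{*}$, so $(L_{\sigma,r}^{\,n}\mathbf{1})(x)=\sum_{w\in I^{n}}\big(p_{w}(Df_{w})(x)^{r}\big)^{s}\le\big(\sum_{i=1}^{N}(p_{i}c_{i}^{r})^{s}\big)^{n}$; arguing as before with $\Psi=\mathbf{1}$ then yields $\lambda(\sigma)\le\sum_{i=1}^{N}(p_{i}c_{i}^{r})^{s(\sigma)}$. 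As $\sigma\to\infty$ this upper bound tends to $\sum_{i=1}^{N}p_{i}c_{i}^{r}<\sum_{i=1}^{N}p_{i}=1$, so $\lambda(\sigma^{*})<1$ for some $\sigma^{*}>0$. Since $\lambda$ is continuous with $\lambda(0)\ge1>\lambda(\sigma^{*})$, the intermediate value theorem produces $\sigma_{r}\in[0,\sigma^{*}]$ with $\lambda(\sigma_{r})=1$, and strict monotonicity makes $\sigma_{r}$ unique.

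I expect the one genuinely delicate point to be the continuity of $\sigma\mapsto\text{spr}(L_{\sigma,r})$, since the spectral radius is not continuous for arbitrary norm-continuous operator families; the remedy is precisely to avoid abstract perturbation theory and instead route everything through the explicit eigenfunctions $\Phi_{\sigma,r}$ and the two-sided pointwise comparison of $L_{\sigma_{1},r}$ and $L_{\sigma_{2},r}$ recorded above. Once that comparison and the $L_{\sigma,r}^{\,n}\mathbf{1}$ estimate are in place, strict monotonicity, continuity, and the location of $\sigma_{r}$ follow by routine bookkeeping.
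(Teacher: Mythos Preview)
Your argument is correct. The paper, however, does not supply its own proof of this lemma: it simply records it as ``essentially \cite[Lemma~4.6]{Nussbaum1}'' and moves on. So you have provided a self-contained proof where the paper defers to the literature.

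Your route---the two-sided pointwise comparison $\rho^{\,s_2-s_1}L_{\sigma_1,r}\le L_{\sigma_2,r}\le a^{\,s_2-s_1}L_{\sigma_1,r}$ on nonnegative functions, fed through the positive eigenfunctions $\Phi_{\sigma,r}$ together with the Gelfand-formula bound for positive operators---is the natural one and is in the spirit of the Perron--Frobenius style arguments of \cite{Nussbaum1}. The endpoint estimates $\lambda(0)=N\ge 1$ and $\lambda(\sigma)\le\sum_i(p_ic_i^r)^{s(\sigma)}\to\sum_ip_ic_i^r<1$ are exactly what one wants. Two small cosmetic points: (i) your bound $\lambda(\sigma)\le\sum_i(p_ic_i^r)^{s}$ already follows from $L_{\sigma,r}\mathbf 1\le\big(\sum_i(p_ic_i^r)^{s}\big)\mathbf 1$ and your ``elementary fact'' with $\Psi=\mathbf 1$, so the detour through $L_{\sigma,r}^n\mathbf 1$ and the chain rule is not needed; (ii) the strict positivity $\rho>0$ implicitly uses $p_i>0$ for all $i$, which is the standing convention here but worth stating.
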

With help of the above lemma and Equation \ref{Qeqn0}, the number $\sigma_r$ can be calculated by the following equation:
$$ \lim_{n \to \infty} \frac{1}{n}\log \sum_{|w|=n} (p_w T_w^r)^{\frac{\sigma_r}{r+\sigma_r}}= 0.$$
By similar arguments, we also have $$ \lim_{n \to \infty} \frac{1}{n}\log \sum_{|w|=n} (p_w R_w^r)^{\frac{\sigma_r}{r+\sigma_r}}= 0.$$
We would like to show that $D_r(\mu)=\sigma_r.$ One can follow the paper of Lindsay and Mauldin \cite{LM} to prove results similar to their paper. But, here we are interested in a different approach.
\section{Main result}
Using operator theoretic results, we show the following as our main theorem.
\begin{theo}\label{QDmainthm}

Let $\{X;f_1, f_2, \dots,f_N\}$ be an IFS consisting of contractive infinitesimal similitudes, and the mappings $ x \to (Df_i)(x)$ be strictly positive H\"older continuous functions on $X.$ Let $(p_1,p_2, \dots, p_N)$ be a probability vector and $\mu$ denote the unique invariant measure such that $$ \mu = \sum_{i=1}^{N}p_i\mu \circ f_i^{-1}.$$ Then, the quantization dimension of order $0<r<+\infty$ of $\mu$ is given by the unique $\sigma_r$ such that $\text{spr}(L_{\sigma_r,r})=1,$ provided the SOSC is satisfied by the given IFS.

\end{theo}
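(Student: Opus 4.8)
The overall plan is to establish the two–sided estimate $c\,n^{-r/\sigma_r}\le u_{n,r}(\mu)\le V_{n,r}(\mu)\le C\,n^{-r/\sigma_r}$ for all large $n$, with constants $0<c\le C<\infty$; taking $-\log$ then gives $-\log V_{n,r}(\mu)=\tfrac{r}{\sigma_r}\log n+O(1)$, whence $D_r(\mu)=\lim_n\tfrac{r\log n}{-\log V_{n,r}(\mu)}=\sigma_r$. The argument rests on two preliminary ingredients. First, a \emph{bounded distortion} estimate: because each $x\mapsto(Df_i)(x)$ is strictly positive and H\"older continuous and each $f_i$ is a contraction, $\log(Df_i)$ is H\"older, and telescoping $\log(Df_w)(x)-\log(Df_w)(y)$ along the orbit via Lemma~\ref{Chain_lemma} and summing a geometric series in the contraction ratios produces $\kappa\ge1$ with $T_w\le\kappa R_w$ for every $w\in I^*$; combined with the chain rule this yields $\kappa^{-r}T_wT_\tau\le T_{w\tau}\le T_wT_\tau$, and (using the metric comparison lemmas for compositions of infinitesimal similitudes in \cite{Nussbaum1}) $d(f_w(x),f_w(y))\asymp T_w\,d(x,y)$ uniformly in $w,x,y$. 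Second, a spectral fact: by \cite[Theorem~3.6]{Nussbaum1} and \cite[Lemma~4.8]{Nussbaum1}, $L_{\sigma_r,r}$ has a strictly positive continuous eigenvector $\Phi_{\sigma_r,r}$ with eigenvalue $\mathrm{spr}(L_{\sigma_r,r})=1$; since $\Phi_{\sigma_r,r}$ is bounded between two positive constants on the compact space $X$, comparing it with the constant function $\mathbf 1$, using $L_{\sigma_r,r}^{\,n}\Phi_{\sigma_r,r}=\Phi_{\sigma_r,r}$, and noting that $(L_{\sigma_r,r}^{\,n}\mathbf 1)(x)=\sum_{|w|=n}\big(p_w(Df_w)(x)^r\big)^{\sigma_r/(r+\sigma_r)}$ by Lemma~\ref{Chain_lemma}, one gets $\sum_{|w|=n}\big(p_w(Df_w)(x)^r\big)^{\sigma_r/(r+\sigma_r)}\asymp1$ uniformly in $n$ and $x$. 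Writing $s_w:=(p_wT_w^r)^{\sigma_r/(r+\sigma_r)}$, bounded distortion turns this into $\sum_{|w|=n}s_w\asymp1$; and since for a finite maximal antichain $\Lambda$ the level $I^{\,n}$ (with $n=\max_{w\in\Lambda}|w|$) is the disjoint union of the sets $\{w\tau:|\tau|=n-|w|\}$, $w\in\Lambda$, together with $s_{w\tau}\asymp s_w s_\tau$, this upgrades to $\sum_{w\in\Lambda}s_w\asymp1$ for \emph{every} finite maximal antichain $\Lambda$.

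For small $\epsilon>0$ put $\Lambda_\epsilon:=\{w\in I^*:s_w\le\epsilon<s_{w^*}\}$, a finite maximal antichain; multiplicativity of $s$ forces $s_w\asymp\epsilon$ for each $w\in\Lambda_\epsilon$, so $\mathrm{Card}(\Lambda_\epsilon)\asymp\epsilon^{-1}$ and $p_wT_w^r=s_w^{(r+\sigma_r)/\sigma_r}\asymp\epsilon^{(r+\sigma_r)/\sigma_r}$. For the upper bound, given $n$ pick $\epsilon\asymp n^{-1}$ with $\mathrm{Card}(\Lambda_\epsilon)\le n$, and take one point from each cell $f_w(E)$, $w\in\Lambda_\epsilon$, as a competitor set; using $\mu=\sum_{w\in\Lambda_\epsilon}p_w\,\mu\circ f_w^{-1}$ and $\mathrm{diam}\,f_w(E)\lesssim T_w$,
\[
V_{n,r}(\mu)\ \le\ \sum_{w\in\Lambda_\epsilon}p_w\,\big(\mathrm{diam}\,f_w(E)\big)^r\ \lesssim\ \sum_{w\in\Lambda_\epsilon}p_wT_w^r\ \asymp\ \epsilon^{-1}\cdot\epsilon^{(r+\sigma_r)/\sigma_r}\ =\ \epsilon^{r/\sigma_r}\ \asymp\ n^{-r/\sigma_r}.
\]

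For the lower bound I would use $u_{n,r}(\mu)\le V_{n,r}(\mu)$ and the SOSC, following the strategy of Graf--Luschgy and Zhu \cite{GL1,Zhu} (cf.\ \cite{LM,R2}) adapted here. One first checks $\mathrm{supp}\,\mu=E\subseteq\overline G$ and $\mu(G)>0$, and fixes a word $v_0$ with $f_{v_0}(\overline G)\subseteq G$ (such $v_0$ exists since the cells $f_w(\overline G)$ shrink uniformly while $E\cap G$ is a nonempty open subset of $E$), so $\delta_1:=d\big(f_{v_0}(\overline G),G^c\big)>0$. By the OSC the cells $\{f_w(G):w\in\Lambda\}$ are pairwise disjoint along any antichain, and $d(f_w(a),f_w(b))\asymp T_w\,d(a,b)$ gives $d\big(f_{wv_0}(E),\,X\setminus f_w(G)\big)\gtrsim T_w\delta_1$. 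Now fix $n$, pick $\epsilon\asymp n^{-1}$ with $\mathrm{Card}(\Lambda_\epsilon)\ge2n$, and let $A\subset X$ be arbitrary with $\mathrm{Card}(A)\le n$. At most $n$ of the disjoint cells $f_w(G)$, $w\in\Lambda_\epsilon$, can meet $A$, so for at least $\tfrac12\mathrm{Card}(\Lambda_\epsilon)$ words $w$ one has $A\cap f_w(G)=\emptyset$; for such $w$, $A\cup G^c\subseteq X\setminus f_w(G)$ (since $f_w(G)\subseteq G$), hence $d(x,A\cup G^c)\ge d\big(f_{wv_0}(E),X\setminus f_w(G)\big)\gtrsim T_w$ for all $x\in f_{wv_0}(E)$. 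Since the sets $f_{wv_0}(E)$, $w\in\Lambda_\epsilon$, are pairwise disjoint with $\mu\big(f_{wv_0}(E)\big)\ge p_{wv_0}=p_{v_0}p_w$, summing over the good $w$ yields
\[
\int d(x,A\cup G^c)^r\,d\mu\ \gtrsim\ p_{v_0}\sum_{w\in\Lambda_\epsilon}p_wT_w^r\ \asymp\ \mathrm{Card}(\Lambda_\epsilon)\cdot\epsilon^{(r+\sigma_r)/\sigma_r}\ \asymp\ \epsilon^{r/\sigma_r}\ \asymp\ n^{-r/\sigma_r},
\]
so $u_{n,r}(\mu)\gtrsim n^{-r/\sigma_r}$, completing the two–sided estimate and hence the proof.

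The step I expect to be the crux is the lower bound, and it is exactly where the complete–metric–space setting genuinely bites: with no fixed similarity ratio available, both the pairwise disjointness and, more delicately, the quantitative separation $d\big(f_{wv_0}(E),X\setminus f_w(G)\big)\gtrsim T_w$ of the cells must be extracted from the infinitesimal–similitude and H\"older hypotheses through the bounded–distortion estimate and the metric comparison lemmas of \cite{Nussbaum1}; the auxiliary functional $u_{n,r}$ together with the SOSC is precisely the device that makes the ``most cells avoid $A$'' argument run without any strong–separation assumption, which — as Schief's examples in \cite{Schief} warn — cannot be taken for granted here.
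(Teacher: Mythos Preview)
Your upper bound is essentially the paper's own argument: the paper also builds the antichain $\Lambda(c')=\{w:(p_wT_w^r)^{\sigma_r/(r+\sigma_r)}<c'\le(p_{w^*}T_{w^*}^r)^{\sigma_r/(r+\sigma_r)}\}$, uses bounded distortion and the spectral normalisation $\sum_{|w|=n}s_w\asymp1$ to control its cardinality, and deduces $\limsup_n n\,e_{n,r}^{\sigma_r}(\mu)<\infty$ (Proposition~\ref{Qlem5}).

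For the lower bound the two arguments diverge. The paper does \emph{not} run a pigeonhole ``most cells miss $A$'' count over a varying antichain. Instead it proves a recursive inequality for $u_{n,r}$ with respect to a \emph{fixed} finite maximal antichain $\Lambda$ (Lemma~\ref{Qlem6}): for all $n\ge n_0(\Lambda)$ there exist $n_w\ge1$ with $\sum_{w\in\Lambda}n_w\le n$ and $u_{n,r}(\mu)\ge C_2^{-r}\sum_{w\in\Lambda}p_wR_w^r\,u_{n_w,r}(\mu)$. This is then combined with the reverse H\"older inequality and an induction on $n$ (Proposition~\ref{Qlem7}) to obtain only $\liminf_n n\,e_{n,r}^{\,l}(\mu)>0$ for every $l<\sigma_r$, which already forces $\underline{D_r}\ge\sigma_r$ via \cite[Proposition~11.3]{GL1}. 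Your counting argument, if it worked, would be shorter and would give the stronger conclusion $\liminf_n n\,e_{n,r}^{\sigma_r}(\mu)>0$ in one stroke; the paper obtains that only afterwards, by a separate antichain estimate (Lemma~\ref{QDlem77} and Proposition~\ref{Qlem71}).

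There is, however, a genuine gap at precisely the step you flag as the crux. You write that ``$d(f_w(a),f_w(b))\asymp T_w\,d(a,b)$ gives $d\big(f_{wv_0}(E),\,X\setminus f_w(G)\big)\gtrsim T_w\delta_1$''. The metric comparison (the paper's standing Hypothesis~\ref{new2}) only controls $d(f_w(a),f_w(b))$ for $a,b\in X$, i.e.\ distances between points that \emph{both} lie in $f_w(X)$; it says nothing about $d(f_w(y),z)$ when $z\in X\setminus f_w(X)$. Since $A$ is an arbitrary finite subset of $X$ and $G^c\subseteq X$, points of $(A\cup G^c)\cap\big(X\setminus f_w(G)\big)$ may well lie outside $f_w(X)$, and in a general complete metric space there is no connectedness or boundary argument forcing such points to sit at distance $\gtrsim T_w$ from the inner cell $f_w(f_{v_0}(X))$. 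Your argument needs this estimate \emph{uniformly} over all $w$ in the varying antichain $\Lambda_\epsilon$, so the defect cannot be absorbed into the choice of constants. This is exactly what the paper's route avoids: because $\Lambda$ is fixed once and for all in Lemma~\ref{Qlem6}, one only needs that each of the finitely many compact sets $f_w(f_\xi(X))$ sits at positive distance from the relevant complement, and the threshold $n_0$ is chosen after $\Lambda$; no $T_w$-scaling is required at that step, the scaling enters only later through the H\"older/induction machinery. In the Euclidean or conformal settings your separation estimate is available and your proof would go through, but in the generality of the theorem as stated it does not, and the paper's more indirect argument is what carries the lower bound.
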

\begin{remark}
Since a similarity transformation is an infinitesimal similitude, the above theorem generalizes the result of Graf and Luschgy \cite{GL2}. More precisely, let $\{X;f_1,f_2,\dots,f_N\}$ be an IFS satisfying the SOSC  such that the mappings $f_i$ are similitudes with similarity ratios $c_i.$ Using Equation \ref{Qeqn0} and Theorem \ref{QDmainthm}, we have
\begin{equation*}
\begin{aligned}
\Phi_{\sigma_r,r}(x)& =\text{spr}(L_{\sigma_r,r})\Phi_{\sigma_r,r}(x)= \sum_{i=1}^{N}\big(p_i(Df_i)(x)^r\big)^{\frac{\sigma_r}{r + \sigma_r}}\Phi_{\sigma_r,r}(f_i(x)) \\ & = \sum_{i=1}^{N}\big(p_ic_i^r\big)^{\frac{\sigma_r}{r + \sigma_r}}\Phi_{\sigma_r,r}(f_i(x)) \le \sup_{x \in X}\Phi_{\sigma_r,r}(x) \sum_{i=1}^{N}\big(p_ic_i^r\big)^{\frac{\sigma_r}{r + \sigma_r}} .
\end{aligned}
\end{equation*}
This together with \cite[Lemma $4.8$]{Nussbaum1} implies that $  \sum_{i=1}^{N}\big(p_i~c_i^r\big)^{\frac{\sigma_r}{r + \sigma_r}} \ge 1.$
Similarly, we obtain $  \sum_{i=1}^{N}\big(p_i~c_i^r\big)^{\frac{\sigma_r}{r + \sigma_r}} \le 1.$ Hence, the claim is proved.
\end{remark}

To reach to the proof of our main theorem, we first establish a series of lemmas, remarks, and propositions as follows.
\begin{remark}\label{Qrem2}
Since each mapping $f_w$ is a contraction, we have $ 0< T_w < 1$ for every $w \in I^*.$
\end{remark}
\begin{lemma}\label{Qlem2}
Suppose $X$ and $f_w$ are defined as above. Then,
there exists $C_1 >1 $ such that
$T_w \le C_1 R_w $ for any $w \in I^* .$
\end{lemma}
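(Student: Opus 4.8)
The plan is to run the classical bounded-distortion argument on the logarithmic scale. First I would iterate the chain rule of Lemma \ref{Chain_lemma}: for $w = w_1 w_2 \cdots w_n \in I^*$ and $x \in X$,
\[
(Df_w)(x) = \prod_{k=1}^{n} (Df_{w_k})\big(g_{k+1}(x)\big), \qquad g_{k+1} := f_{w_{k+1}} \circ \cdots \circ f_{w_n},
\]
with $g_{n+1} = \mathrm{id}$, so the argument at level $k=n$ is simply $x$. Since each $(Df_i)$ is continuous (Lemma \ref{conti}) and strictly positive on the compact space $X$, there are constants $0 < m \le M < \infty$ with $m \le (Df_i)(x) \le M$ for all $i$ and $x$; in particular $(Df_w)$ is continuous, so $T_w$ and $R_w$ are attained, say at $x^\sharp$ and $x_\flat$ respectively, and $R_w > 0$.

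The key observation is that $\log (Df_i)$ is again H\"older continuous with the same exponent: if $(Df_i)$ is H\"older with exponent $\beta \in (0,1]$ and constant $H$ (uniformly in $i$), then, $\log$ being Lipschitz with constant $1/m$ on $[m, M]$,
\[
\big|\log (Df_i)(x) - \log (Df_i)(y)\big| \le \frac{H}{m}\, d(x,y)^\beta =: H'\, d(x,y)^\beta .
\]
Now set $c := \max_i c_i < 1$. Since $g_{k+1}$ has Lipschitz constant at most $\prod_{j>k} c_{w_j} \le c^{\,n-k}$, we get $d\big(g_{k+1}(x^\sharp), g_{k+1}(x_\flat)\big) \le c^{\,n-k}\, \mathrm{diam}(X)$, and hence, applying the chain rule at the two extremal points,
\[
\log T_w - \log R_w = \sum_{k=1}^{n} \Big( \log (Df_{w_k})\big(g_{k+1}(x^\sharp)\big) - \log (Df_{w_k})\big(g_{k+1}(x_\flat)\big) \Big) \le H'\, \mathrm{diam}(X)^\beta \sum_{k=1}^{n} c^{\,\beta(n-k)} \le \frac{H'\, \mathrm{diam}(X)^\beta}{1 - c^\beta}.
\]
Setting $C_1 := \exp\!\big( H'\, \mathrm{diam}(X)^\beta / (1 - c^\beta)\big)$, a finite constant strictly larger than $1$ and independent of $w$, gives $T_w \le C_1 R_w$ for all $w \in I^*$.

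There is no serious obstacle here; the two points worth stating carefully are (i) that $\log(Df_i)$ inherits H\"older continuity, which is where strict positivity together with compactness of $X$ is used to bound $(Df_i)$ away from $0$, and (ii) that the distortion contributed at level $k$ is governed by the image of $X$ under a composition of $n-k$ contractions, so it decays geometrically and the total telescopes into a convergent geometric series. Once these are in place the estimate is immediate.
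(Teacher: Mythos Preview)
Your argument is correct and follows essentially the same route as the paper: pass to $\log(Df_i)$, use its H\"older continuity (via strict positivity on the compact $X$), apply the chain rule, and control the telescoping log-difference by a geometric series in the contraction ratio. The only cosmetic difference is that the paper bounds $|\log(Df_w)(x)-\log(Df_w)(y)|$ for arbitrary $x,y$ and then takes sup/inf, whereas you evaluate directly at the extremal points $x^\sharp,x_\flat$.
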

\begin{proof}
Note that the mappings $ x \mapsto (Df_{\xi})(x) $, $\xi \in I,$ are H\"older continuous with exponent $s>0$, that is, $$ |(Df_{\xi})(x) -(Df_{\xi})(y)| \le K d(x,y)^s,~~ \forall ~~ x,y \in X.$$
Let $x,y \in X.$ Then, using the mean-value theorem, there exists $t$ between $(Df_{\xi})(x)$ and $(Df_{\xi})(y)$ such that
\begin{equation}\label{QDeqnC1}
\begin{aligned}
|\ln((Df_{\xi})(x))- \ln((Df_{\xi})(y))|& = \frac{1}{t} | (Df_{\xi})(x) -(Df_{\xi})(y)| \le \frac{K}{t}  d(x,y)^s \le \frac{K}{m_*} d(x,y)^s.
\end{aligned}
\end{equation}
The last inequality follows because $ 0< m_*:=\min_{\xi \in I}\{R_{\xi}\}\le (Df_{\xi})(x) < 1,~ \forall~ x \in X.$ Let $w \in I^*$ such that $|w|=n$ and $x,y \in X.$ Now, we assign $x_i= f_{w_{i+1}}\circ \dots f_{w_n}(x)$ and $y_i= f_{w_{i+1}}\circ \dots f_{w_n}(y).$ Then, with the notation $c_{\max}=\max\{c_1,c_2, \dots,c_N\}<1$, where $c_{\xi}$ denotes the contraction ratio of $f_{\xi},$ we have $$d(x_i,y_i) \le c_{\max}^{n-i} d(x,y) \le c_{\max}^{n-i} \text{diam}(X).$$
 Thanks to the triangle inequality, Lemma \ref{Chain_lemma} and Equation \ref{QDeqnC1},
 \begin{equation*}
 \begin{aligned}
 & \big| \ln\big((Df_w)(x)\big)- \ln\big((Df_w)(y)\big) \big|\\
  & \le \sum_{i=1}^n \big| \ln\big((Df_{w_i})(x_i)\big)- \ln\big((Df_{w_i})(y_i)\big) \big| \le \sum_{i=1}^n \frac{K}{m_*} d(x_i,y_i)^s \\
   &\le \sum_{i=1}^n \frac{K}{m_*} c_{\max}^{s(n-i)} \text{diam}(X)^s  \le \frac{K \text{diam}(X)^s}{m_*(1 -c_{\max}^s)}.
 \end{aligned}
 \end{equation*}
  Therefore, we deduce $$ (Df_w)(x) \le (Df_w)(y) \exp\Big(\frac{K \text{diam}(X)^s}{m_*(1 -c_{\max}^s)}\Big).$$
Now for a suitable constant $C_1 > 1$, $$ (Df_w)(x) \le C_1 (Df_w)(y)~ \forall ~x,y ~\in X.$$ On taking infimum over all $y \in X$, we have
\begin{equation*}
\begin{aligned}
  (Df_w)(x) & \le C_1 \inf_{y \in X}(Df_w)(y)~ \forall ~x \in X \\ & = C_1 R_w ~ \forall ~x \in X.
\end{aligned}
\end{equation*}
Further, we apply supremum on both sides, and deduce that $ T_w \le C_1 R_w.$
\end{proof}
The above lemma shows that the given IFS consisting of contractive infinitesimal similitudes satisfies the so-called bounded distortion property, see, for instance, \cite[Lemma $2.1$]{Pat}.

\begin{lemma}\label{new22}
Let $(Df_{\xi})(x)> 0 $ for every $x \in X,~~\xi \in I.$ Then, there exists $ \tilde{C} \ge  C_1$ such that for $x,y  \in X$, $$ \frac{(Df_\xi)(x)}{\tilde{C}} \le \frac{d(f_\xi(x),f_\xi(y))}{d(x,y)} \le \tilde{C} (Df_\xi)(x)$$ holds for every $\xi \in I.$ Furthermore, $f_{\xi}^{-1}: f_{\xi}(X) \to X$ is well-defined, and is an infinitesimal similitude on $f_{\xi}(X)$ for each $\xi \in I.$
\end{lemma}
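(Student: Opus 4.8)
The idea is to compare, uniformly in $\xi\in I$, the secant ratio $S_\xi(x,y):=d(f_\xi(x),f_\xi(y))/d(x,y)$ (for $x\neq y$) with the infinitesimal ratio $(Df_\xi)(x)$, splitting the pairs $(x,y)$ into those close to the diagonal of $X\times X$ and those bounded away from it. Since $X$ is compact and each $x\mapsto(Df_\xi)(x)$ is continuous by Lemma~\ref{conti} and strictly positive by hypothesis, the number $m:=\min_{\xi\in I}\min_{x\in X}(Df_\xi)(x)$ is strictly positive, while $(Df_\xi)(x)\le c_\xi<1$ because $f_\xi$ is a $c_\xi$-contraction (as in the Remark following Lemma~\ref{Chain_lemma}). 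The upper bound in the lemma is then immediate, with no case analysis: $S_\xi(x,y)\le c_\xi\le (c_\xi/m)\,(Df_\xi)(x)$ for all $x\neq y$.

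For the lower bound I would first treat pairs that are close together. A purely local estimate follows directly from the definition of an infinitesimal similitude: for each $x_0\in X$ there is $\delta_{x_0}>0$ such that $S_\xi(y,z)\ge\tfrac12(Df_\xi)(x_0)$ whenever $y,z\in B(x_0,\delta_{x_0})$ with $y\neq z$, for otherwise a sequence of offending pairs $(y_k,z_k)$ with $y_k,z_k\to x_0$ and $y_k\neq z_k$ would contradict $S_\xi(y_k,z_k)\to(Df_\xi)(x_0)$. Using compactness, choose a Lebesgue number $\rho>0$ for the open cover $\{B(x_0,\delta_{x_0}):x_0\in X\}$; then any pair with $d(y,z)<\rho$ lies inside one ball $B(x_0,\delta_{x_0})$, so $S_\xi(y,z)\ge\tfrac12(Df_\xi)(x_0)$. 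Finally I replace the base point $x_0$ by $y$ via the bounded-distortion estimate of Lemma~\ref{Qlem2} for the one-letter word $\xi$ (namely $T_\xi\le C_1R_\xi$, whence $(Df_\xi)(x_0)\ge C_1^{-1}(Df_\xi)(y)$), obtaining $S_\xi(y,z)\ge \frac{1}{2C_1}(Df_\xi)(y)$ for every pair with $d(y,z)<\rho$.

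The main obstacle is the lower bound for pairs bounded away from the diagonal. On the compact set $K_\rho:=\{(x,y)\in X\times X:d(x,y)\ge\rho\}$ the function $S_\xi$ is continuous (its denominator is $\ge\rho$), so it attains a minimum $a_\xi:=\min_{K_\rho}S_\xi=S_\xi(x^\ast,y^\ast)$ for some pair $x^\ast\neq y^\ast$. The crux is that $a_\xi>0$, which holds iff $f_\xi(x^\ast)\neq f_\xi(y^\ast)$; thus the positivity of $a_\xi$ is equivalent to injectivity of $f_\xi$. I would take this injectivity as part of the IFS data --- it is in any case presupposed by the conclusion ``$f_\xi^{-1}$ is well defined'', and it does not follow from the standing hypotheses alone (it can fail for contractive infinitesimal similitudes of a Cantor set with constant positive $(Df_\xi)$). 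Granting $a_\xi>0$, one has $S_\xi(x,y)\ge a_\xi\ge (a_\xi/c_\xi)\,(Df_\xi)(x)$ on $K_\rho$, and putting $\tilde C:=\max_{\xi\in I}\max\{\,2C_1,\ c_\xi/m,\ c_\xi/a_\xi\,\}$ (so that $\tilde C\ge C_1$) yields $\tilde C^{-1}(Df_\xi)(x)\le S_\xi(x,y)\le \tilde C\,(Df_\xi)(x)$ for all $x\neq y$ in $X$ and all $\xi\in I$, which is the desired two-sided estimate.

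For the ``furthermore'' part, the lower bound shows $d(f_\xi(x),f_\xi(y))>0$ whenever $d(x,y)>0$, so $f_\xi$ is injective and $f_\xi^{-1}\colon f_\xi(X)\to X$ is well defined; being a continuous bijection from the compact space $X$ onto the Hausdorff space $f_\xi(X)$, $f_\xi$ is a homeomorphism, hence $f_\xi^{-1}$ is continuous. Given $y=f_\xi(x)$ and sequences $u_m\neq v_m$ in $f_\xi(X)$ with $u_m,v_m\to y$, write $u_m=f_\xi(a_m)$ and $v_m=f_\xi(b_m)$; continuity of $f_\xi^{-1}$ gives $a_m,b_m\to x$ with $a_m\neq b_m$, so
\[
\frac{d\big(f_\xi^{-1}(u_m),f_\xi^{-1}(v_m)\big)}{d(u_m,v_m)}=\frac{d(a_m,b_m)}{d\big(f_\xi(a_m),f_\xi(b_m)\big)}\longrightarrow\frac{1}{(Df_\xi)(x)},
\]
because $f_\xi$ is an infinitesimal similitude at $x$ and $(Df_\xi)(x)>0$; the limit is independent of the chosen sequences, so $f_\xi^{-1}$ is an infinitesimal similitude on $f_\xi(X)$, with $(Df_\xi^{-1})(y)=1/(Df_\xi)(f_\xi^{-1}(y))$. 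Every step except the positivity of $a_\xi$ is a routine blend of compactness with the defining property of infinitesimal similitudes; it is that positivity --- equivalently, the injectivity of each $f_\xi$ --- on which the proof genuinely hinges.
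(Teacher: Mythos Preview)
Your argument is correct and your diagnosis of the injectivity issue is sharp, but the paper takes a shorter route that avoids the near/far split. The paper extends the secant ratio $S_\xi$ to all of $X\times X$ by setting $\mathcal{F}(x,x):=(Df_\xi)(x)$ on the diagonal; the very definition of an infinitesimal similitude (together with Lemma~\ref{conti}) makes $\mathcal{F}$ continuous on the compact space $X\times X$. Dividing by the strictly positive continuous function $(Df_\xi)(x)$ gives a continuous $\mathcal{G}$ with $\mathcal{G}(x,x)=1$, and a single appeal to compactness is meant to yield $\tilde{C}^{-1}\le\mathcal{G}\le\tilde{C}$ --- no Lebesgue number, no separate treatment of pairs close to the diagonal, and no use of Lemma~\ref{Qlem2} to shift base points. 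What your longer argument buys is transparency about where injectivity actually enters (only in your ``far'' case), and you are right that it does not follow from the stated hypotheses; your Cantor-type remark is on target. The paper's proof shares exactly this gap: the asserted lower bound $\mathcal{G}\ge\tilde{C}^{-1}$ tacitly presupposes that $\mathcal{G}$ never vanishes off the diagonal, i.e.\ that $f_\xi$ is injective, and the paper then ``deduces'' injectivity from that very bound. Your ``furthermore'' computation coincides with the paper's.
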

\begin{proof}
Let $\xi \in I.$ We first define a function $\mathcal{F}:X \times X \to \mathbb{R}$ by
\begin{equation*}
\mathcal{F}(x,y)=
\begin{cases}
\frac{d(f_{\xi}(x),f_{\xi}(y))}{d(x,y)}, ~~ \text{if}~~ x \ne y \\
(Df_{\xi})(x),~~~~~\text{if} ~~ x=y.
\end{cases}
\end{equation*}
By Lemma \ref{conti}, one could see that the function $\mathcal{F}$ is continuous on $X \times X.$ Since $(Df_{\xi})(x) > 0 ,$ define $$\mathcal{G}(x,y):= \frac{\mathcal{F}(x,y)}{(Df_{\xi})(x)}.$$
Note that $\mathcal{G}(x,x)=1$ and $\mathcal{G}$ is continuous. Using the compactness of $X \times X$, we can choose a sufficiently large number $\tilde{C} \ge C_1$ such that $$ \tilde{C}^{-1} \le \mathcal{G}(x,y) \le  \tilde{C} $$ for every $x,y \in X \times X.$
Without loss of generality, we assume that $$ \tilde{C}^{-1}~(Df_{\xi})(x) \le \frac{d(f_{\xi}(x),f_{\xi}(y))}{d(x,y)} \le  \tilde{C} (Df_{\xi})(x),$$ holds for all $\xi \in I.$
From the above, we deduce that $f_{\xi}$ is one-one, hence each $f_{\xi}:X \to f_{\xi}(X)$ is invertible. Further, let $z \in f_{\xi}(X).$ Then,
\begin{equation*}
\begin{aligned}
(Df_{\xi}^{-1})(z) & = \lim_{m \to \infty} \frac{d(f_{\xi}^{-1}(z_m),f_{\xi}^{-1}(y_m))}{d(z_m,y_m)}  = \lim_{m \to \infty} \frac{d(x_m,t_m)}{d(f_{\xi}(x_m),f_{\xi}(t_m))}
 = \frac{1}{(Df)(f_{\xi}^{-1}(z))},
\end{aligned}
\end{equation*}
where $x_m=f_{\xi}^{-1}(z_m),~ t_m=f_{\xi}^{-1}(y_m)$ such that $z_m\ne y_m$ and $z_m \to z, ~y_m \to z.$
\end{proof}
Nussbaum et al. \cite{Nussbaum1} proved a result analogous to ``mean value theorem" in terms of infinitesimal similitudes. In particular, Lemma $4.3$ of \cite{Nussbaum1} assumed a condition that $\theta$ is Lipschitz which has not been used in the proof. Therefore, the next lemma can be seen as a modification of \cite[Lemma $4.3$]{Nussbaum1}.
\begin{lemma}\label{new28}
Let $(Df_{\xi})(x)> 0 $ for every $x \in X,~~\xi \in I.$ Then, for a given $\ep>1$ and $w \in I^*$, there exists $\delta> 0$ such that for $x,y  \in X$ with $d(x,y) < \delta$ we have $$ \ep^{-1} (Df_w)(x) \le \frac{d(f_w(x),f_w(y))}{d(x,y)} \le \ep (Df_w)(x).$$
\end{lemma}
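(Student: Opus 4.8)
The plan is to reduce the estimate to a compactness argument for a single continuous function on $X\times X$. First I would observe that, by Lemma \ref{Chain_lemma}, the composition $f_w=f_{w_1}\circ\cdots\circ f_{w_n}$ is itself an infinitesimal similitude on $X$, and that the chain rule in that lemma expresses $(Df_w)(x)$ as a product of factors of the form $(Df_{w_i})(\cdot)$, each of which is strictly positive by hypothesis; hence $x\mapsto(Df_w)(x)$ is strictly positive, and it is continuous by Lemma \ref{conti} applied to $f_w$.

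Next, exactly as in the proof of Lemma \ref{new22}, I would introduce $\mathcal{F}_w:X\times X\to\mathbb{R}$ by
$\mathcal{F}_w(x,y)=\dfrac{d(f_w(x),f_w(y))}{d(x,y)}$ for $x\ne y$ and $\mathcal{F}_w(x,x)=(Df_w)(x)$.
Continuity of $\mathcal{F}_w$ off the diagonal is immediate, while continuity at a diagonal point $(x,x)$ is precisely what the definition of an infinitesimal similitude at $x$ gives for sequences of distinct points, combined with Lemma \ref{conti} for sequences that pass through the diagonal; so $\mathcal{F}_w$ is continuous on the compact space $X\times X$. Since $(Df_w)$ is continuous and strictly positive, the quotient $\mathcal{G}_w:=\mathcal{F}_w/(Df_w)$ is continuous on $X\times X$ and satisfies $\mathcal{G}_w(x,x)=1$ for every $x\in X$.

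Now fix $\ep>1$. The set $U:=\mathcal{G}_w^{-1}\big((\ep^{-1},\ep)\big)$ is open in $X\times X$ and contains the diagonal $\Delta=\{(x,x):x\in X\}$. I claim there is $\delta>0$ with $\{(x,y):d(x,y)<\delta\}\subseteq U$: otherwise, for each $m$ there are $x_m\ne y_m$ with $d(x_m,y_m)<1/m$ and $(x_m,y_m)\notin U$; passing to a subsequence with $x_m\to x$ (possible since $X$ is compact) forces $y_m\to x$, so $(x_m,y_m)\to(x,x)\in\Delta\subseteq U$, contradicting that $(X\times X)\setminus U$ is closed. For this $\delta$, any $x,y\in X$ with $0<d(x,y)<\delta$ satisfy $\ep^{-1}<\mathcal{G}_w(x,y)<\ep$, and multiplying by $(Df_w)(x)>0$ yields the stated two-sided bound; the case $x=y$ is trivial under the convention $\mathcal{F}_w(x,x)=(Df_w)(x)$. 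The only point requiring care is the joint continuity of $\mathcal{F}_w$ on the diagonal, and that is exactly the step already carried out for $\mathcal{F}$ in Lemma \ref{new22}; everything else is a routine tube-lemma-type compactness argument.
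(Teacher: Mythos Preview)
Your proof is correct and follows essentially the same approach as the paper: define $\mathcal{G}_w=\mathcal{F}_w/(Df_w)$, observe it is continuous on the compact space $X\times X$ and equals $1$ on the diagonal, and use compactness to obtain a uniform $\delta$. The only cosmetic difference is that the paper passes to $\ln\mathcal{G}_w$ and invokes uniform continuity at the pair $\big((x,y),(y,y)\big)$, whereas you run a direct tube-lemma argument on the open preimage $\mathcal{G}_w^{-1}\big((\ep^{-1},\ep)\big)$; these are interchangeable ways of extracting the same $\delta$.
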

\begin{proof}
Let $w \in I^*.$
From the proof of the previous lemma,
 a function $\mathcal{G}:X \times X \to \mathbb{R}$ defined by
 \begin{equation*}
 \mathcal{G}(x,y)=
 \begin{cases}
 \frac{d(f_{w}(x),f_{w}(y))}{d(x,y) (Df_w)(x)}, ~~ \text{if}~~ x \ne y \\
 1,~~~~~\text{if} ~~ x=y.
 \end{cases}
 \end{equation*}
 is uniformly continuous on $X \times X$, where $X \times X$ is equipped with the metric $d_2\big((x,z),(y,t)\big):= \sqrt{d(x,y)^2+d(z,t)^2}.$ Since $\mathcal{G}$ is a strictly positive function, and attains its minimum and maximum, $\ln (\mathcal{G}(x,y))$ is also uniformly continuous on $X \ti X.$ Now, let $\ep>1.$ Then, there exists $\delta> 0$ such that
 \[
    | \ln\big(\mathcal{G}(x,z)\big) -  \ln\big(\mathcal{G}(y,t)\big)| \le \ln(\ep)
  \]
  holds for every $ (x,z), (y,t) \in X \times X$ with $d_2\big((x,z),(y,t)\big)< \delta.$ Putting $z=y$ and $t=y$, we have
  $| \ln\big(\mathcal{G}(x,y) \big)- \ln\big( \mathcal{G}(y,y)\big)| \le \ln(\ep)$ for every $ (x,y), (y,y) \in X \times X$ with $d(x,y)< \delta.$ Consequently,
  \[
   \ln(\ep^{-1}) = - \ln(\ep )\le \ln\big(\mathcal{G}(x,y) \big)\le \ln(\ep)
  \]
  holds for every $x,y \in X$ with $d(x,y)< \delta.$ Thus, the proof of the lemma is complete.
\end{proof}
Here and throughout the paper, we assume the following for perpetuating our general setting of a complete metric space:
\begin{hypothesis}\label{new2}
 There exists $C_2 \ge  C_1$ such that for every $x,y  \in X,~~ w \in I^*$, we have $$ \frac{(Df_w)(x)}{C_2} \le \frac{d(f_w(x),f_w(y))}{d(x,y)} \le C_2 (Df_w)(x).$$
 \end{hypothesis}
Here we emphasis on the fact that the above hypothesis is satisfied by a finite IFS consisting of similitudes \cite{JH}, bi-Lipschitz mappings \cite{R2}, or conformal mappings on Riemannian manifolds \cite{Pat}.
\begin{remark}\label{new2}
In view of the above hypothesis, we immediately get $$ C_2^{-1}  R_w d(x,y)\le d(f_w(x),f_w(y)) \le C_2 T_w d(x,y)$$ for every $x,y \in X$ and $w \in I^*.$
\end{remark}
\begin{remark}
Using Lemma \ref{Qlem2} and the above remark, it follows that with $M= C_2^2$ $$M^{-1} T_w d(x,y) \le d(f_w(x),f_w(y)) \le M T_w d(x,y).$$
\end{remark}
\begin{lemma}\label{QDlem11}
For any $w, \tau \in I^*,$ we have $$M^{-1} T_w T_{\tau} \le T_{w \tau} \le M T_w T_{\tau}.$$
\end{lemma}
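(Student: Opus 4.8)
The plan is to estimate $(Df_{w\tau})(x)$ in terms of $(Df_w)$ and $(Df_\tau)$ using the chain rule for infinitesimal similitudes (Lemma \ref{Chain_lemma}), and then convert the resulting bounds on these pointwise derivatives into bounds on the maxima $T_w$, $T_\tau$, $T_{w\tau}$ by invoking the bounded distortion estimate of Lemma \ref{Qlem2}. First I would fix $x \in X$ and write, by Lemma \ref{Chain_lemma}, $(Df_{w\tau})(x) = (Df_w)\big(f_\tau(x)\big)\,(Df_\tau)(x)$. Since $f_\tau(x) \in X$, we have the trivial bounds $R_w \le (Df_w)(f_\tau(x)) \le T_w$ and $R_\tau \le (Df_\tau)(x) \le T_\tau$, hence $R_w R_\tau \le (Df_{w\tau})(x) \le T_w T_\tau$ for every $x$. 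Taking the supremum over $x \in X$ gives immediately $T_{w\tau} \le T_w T_\tau$, which already establishes the right-hand inequality with constant $1 \le M$.

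For the left-hand inequality, I would take the infimum over $x$ in the chain-rule identity to get $R_{w\tau} \ge R_w R_\tau$, and then use Lemma \ref{Qlem2} twice: $T_w \le C_1 R_w$ and $T_\tau \le C_1 R_\tau$, together with $T_{w\tau} \le C_1 R_{w\tau}$ applied in reverse, i.e. $R_{w\tau} \le T_{w\tau}$ trivially. Combining,
\[
T_{w\tau} \ge R_{w\tau} \ge R_w R_\tau \ge \frac{T_w}{C_1}\cdot\frac{T_\tau}{C_1} = \frac{1}{C_1^2}\, T_w T_\tau .
\]
Since $M = C_2^2 \ge C_1^2$ (because $C_2 \ge C_1 > 1$), we have $C_1^{-2} \ge M^{-1}$ is false in general—so instead I would simply note $M^{-1} T_w T_\tau \le C_1^{-2} T_w T_\tau$ requires $M \ge C_1^2$, which holds since $M = C_2^2$ and $C_2 \ge C_1$. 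Thus $M^{-1} T_w T_\tau \le T_{w\tau}$, completing the proof.

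Alternatively, and perhaps more in the spirit of the preceding remark, one can argue metrically: by Remark \ref{new2} (the last unlabeled remark), $d(f_{w\tau}(x),f_{w\tau}(y))$ is comparable to $M T_{w\tau} d(x,y)$ on one side and $M^{-1} T_{w\tau} d(x,y)$ on the other, while writing $f_{w\tau} = f_w \circ f_\tau$ and applying the same two-sided estimate successively to $f_\tau$ and then to $f_w$ shows $d(f_{w\tau}(x),f_{w\tau}(y))$ is comparable to $T_w T_\tau d(x,y)$ up to a factor $M^2$; matching the two expressions yields the claim after adjusting constants. The only mild subtlety—really the one place to be careful—is the bookkeeping of constants: one must check that the constant $M = C_2^2$ is genuinely large enough to absorb the two successive distortions, which is exactly why the lemma is stated with $M$ rather than $C_2$. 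I expect this constant-chasing to be the sole (minor) obstacle; there is no analytic difficulty beyond the chain rule and the already-established bounded distortion property.
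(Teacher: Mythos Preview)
Your proof is correct and essentially identical to the paper's: both arguments apply the chain rule $(Df_{w\tau})(x) = (Df_w)(f_\tau(x))\,(Df_\tau)(x)$ to obtain $T_{w\tau} \le T_w T_\tau$ directly, and then bound $T_{w\tau} \ge R_{w\tau} \ge R_w R_\tau \ge C_1^{-2} T_w T_\tau \ge M^{-1} T_w T_\tau$ via Lemma~\ref{Qlem2} together with $M = C_2^2 \ge C_1^2$. Your momentary hesitation over whether $C_1^{-2} \ge M^{-1}$ is unwarranted---this inequality does hold precisely because $C_2 \ge C_1$---but you recover at once and the logic is sound.
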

\begin{proof}
We have
\begin{equation*}
\begin{aligned}
T_{w\tau} & =\sup_{x \in X} (Df_{w\tau})(x)  = \sup_{x \in X} (Df_w)(f_\tau(x)) ~(Df_\tau)(x)\\  & \le \sup_{x \in X} (Df_w)(f_\tau(x)) ~\sup_{x \in X}(Df_\tau)(x) \le \sup_{x \in X} (Df_w)(x) ~\sup_{x \in X}(Df_\tau)(x) \\
&= T_w T_{\tau}  \le M T_w T_{\tau},
\end{aligned}
\end{equation*}
and
\begin{equation*}
\begin{aligned}
T_{w\tau} & \ge \inf_{x \in X}(Df_{w \tau})(x)  = \inf_{x \in X} (Df_w)(f_\tau(x)) ~(Df_\tau)(x) \\
&\ge  \inf_{x \in X}(Df_w)(f_\tau(x))  \inf_{x \in X}(Df_\tau)(x)  \\
&\ge  \inf_{x \in X}(Df_w)(x)  \inf_{x \in X}(Df_\tau)(x) = R_w R_{\tau} \ge C_1^{-1} T_w C_1^{-1}T_{\tau} \\&  = C_1^{-2}T_w T_{\tau }  \ge M^{-1}T_w T_{\tau}.
\end{aligned}
\end{equation*}
The last inequality in the above follows because $M=C_2^2$ and $C_2 \ge C_1.$ Thus, the assertion is proved.
\end{proof}
\begin{lemma}\label{QDlem12}
Let $0< r<+\infty$ be fixed. Then, for $n \in \mathbb{N},$
$$ M^{-\frac{r \sigma_r}{r+\sigma_r}} \le \sum_{|w|=n} (p_wT_w^r)^{\frac{ \sigma_r}{r+\sigma_r}} \le M^{\frac{r \sigma_r}{r+\sigma_r}}.$$
\end{lemma}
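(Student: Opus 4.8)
The plan is to iterate the eigenvalue equation \ref{Qeqn0} at the critical parameter $\sigma_r$ and then replace the pointwise derivative $(Df_w)(x)$ by the uniform quantities $R_w$ and $T_w$, using the bounded distortion estimate of Lemma \ref{Qlem2}.

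Write $\Phi := \Phi_{\sigma_r,r}$ and set $q := \frac{\sigma_r}{r+\sigma_r}$, so that the exponent appearing in the claim is $rq = \frac{r\sigma_r}{r+\sigma_r} \ge 0$. By Lemma \ref{QDstrdec} we have $\text{spr}(L_{\sigma_r,r}) = 1$, so Equation \ref{Qeqn0} reads $\Phi(x) = \sum_{i=1}^N (p_i (Df_i)(x)^r)^q \, \Phi(f_i(x))$ for all $x \in X$. I would substitute this identity back into each of the terms $\Phi(f_i(x))$ on the right-hand side, invoking the chain rule of Lemma \ref{Chain_lemma} in the form $(Df_{wi})(x) = (Df_w)(f_i(x))\,(Df_i)(x)$ together with $p_{wi} = p_w p_i$; carrying this out $n$ times gives, by induction on $n$,
\[
\Phi(x) = \sum_{|w|=n} (p_w (Df_w)(x)^r)^q \, \Phi(f_w(x)), \qquad x\in X,\ n\in\mathbb{N}.
\]
Since $\Phi$ is a strictly positive element of $\mathcal{C}(X)$ and $X$ is compact, $\Phi$ attains a minimum $m>0$ at some $x_1$ and a maximum $M_\Phi<\infty$ at some $x_0$.

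For the lower bound I would evaluate the displayed identity at $x_0$: since $\Phi(f_w(x_0)) \le M_\Phi = \Phi(x_0)$ for every $w$, it yields $\Phi(x_0) \le \Phi(x_0)\sum_{|w|=n}(p_w(Df_w)(x_0)^r)^q$, hence $\sum_{|w|=n}(p_w(Df_w)(x_0)^r)^q \ge 1$; as $T_w \ge (Df_w)(x_0)$ and $q\ge 0$, this forces $\sum_{|w|=n}(p_w T_w^r)^q \ge 1 \ge M^{-rq}$, using $M=C_2^2>1$. For the upper bound I would evaluate at $x_1$: since $\Phi(f_w(x_1)) \ge m = \Phi(x_1)$, the identity gives $\sum_{|w|=n}(p_w(Df_w)(x_1)^r)^q \le 1$; combining $T_w \le C_1 R_w \le C_1 (Df_w)(x_1)$ from Lemma \ref{Qlem2} with $1<C_1\le C_2\le C_2^2 = M$ yields $\sum_{|w|=n}(p_w T_w^r)^q \le C_1^{rq}\sum_{|w|=n}(p_w(Df_w)(x_1)^r)^q \le C_1^{rq} \le M^{rq}$. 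Together these are precisely the asserted inequalities.

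The substantive point — more bookkeeping than genuine obstacle — is getting the iterated chain-rule identity exactly right, in particular respecting the composition order $f_w = f_{w_1}\circ\cdots\circ f_{w_n}$ when the recursion is unfolded, and then verifying that the constants $C_1$, $C_2$, $M$ line up so that both $M^{-rq}\le 1$ and $C_1^{rq}\le M^{rq}$; all of this follows from $M = C_2^2 \ge C_1^2 > 1$ and $rq\ge 0$. Nothing deeper is required beyond Lemma \ref{Qlem2}, the normalization $\text{spr}(L_{\sigma_r,r})=1$ from Lemma \ref{QDstrdec}, and the compactness of $X$.
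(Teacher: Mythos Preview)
Your proof is correct, but it follows a different route from the paper. The paper derives the bounds from Lemma~\ref{QDlem11} (the quasi-multiplicativity $M^{-1}T_wT_\tau\le T_{w\tau}\le MT_wT_\tau$) together with a Fekete-type argument quoted from \cite[Lemma~3.5]{R2}: setting $a_n=\sum_{|w|=n}(p_wT_w^r)^{\sigma_r/(r+\sigma_r)}$, one checks that $M^{rq}a_n$ is submultiplicative and $M^{-rq}a_n$ is supermultiplicative, and then the known limit $\frac{1}{n}\log a_n\to 0$ forces $M^{-rq}\le a_n\le M^{rq}$.

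Your argument bypasses Lemma~\ref{QDlem11} entirely by iterating the eigenvalue equation for the positive eigenfunction $\Phi_{\sigma_r,r}$ and evaluating at its extrema. This is self-contained (no appeal to \cite{R2}) and in fact yields the sharper lower bound $\sum_{|w|=n}(p_wT_w^r)^{\sigma_r/(r+\sigma_r)}\ge 1$. The paper's approach, on the other hand, is more portable: it works in any setting where quasi-multiplicativity holds, without needing the spectral information about $L_{\sigma_r,r}$ or the existence of a strictly positive eigenfunction.
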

\begin{proof}
Proof follows from Lemma \ref{QDlem11} and \cite[Lemma $3.5$]{R2}.
\end{proof}
\begin{lemma}\label{Qlem3}
Let $0 < r <+\infty$ and $\sigma_r$ be as in the previous section. Then,
$$ \sum_{w \in \Lambda} (p_w T_w^r)^{\frac{\sigma_r}{r+\sigma_r}} \le C^{\frac{2r\sigma_r}{r+\sigma_r}},$$ where $\Lambda$ is a finite maximal antichain.
\end{lemma}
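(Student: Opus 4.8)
The plan is to control a general finite maximal antichain by comparing it with a full level set $I^n$, where $n$ is the largest word-length occurring in the antichain, and then to feed in the estimates of Lemmas~\ref{QDlem11} and~\ref{QDlem12} together with a lower bound on the level-set sums extracted from the eigenfunction of $L_{\sigma_r,r}$.

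Write $\alpha:=\sigma_r/(r+\sigma_r)$ and $\Phi:=\Phi_{\sigma_r,r}$. Since $\text{spr}(L_{\sigma_r,r})=1$ by Lemma~\ref{QDstrdec}, equation~\eqref{Qeqn0} says $L_{\sigma_r,r}\Phi=\Phi$, hence $L_{\sigma_r,r}^{n}\Phi=\Phi$ for every $n$; unwinding the iterated composition with the chain rule (Lemma~\ref{Chain_lemma}) gives
$$\Phi(x)=\sum_{|w|=n}\big(p_w (Df_w)(x)^r\big)^{\alpha}\Phi(f_w(x)),\qquad x\in X.$$
Evaluating at a point $x^\ast$ where the strictly positive continuous function $\Phi$ attains its maximum on the compact space $X$, and using $(Df_w)(x^\ast)\le T_w$ together with $\Phi(f_w(x^\ast))\le\Phi(x^\ast)$, one divides by $\Phi(x^\ast)>0$ to obtain the key lower bound
$$\sum_{|w|=n}(p_w T_w^r)^{\alpha}\ge 1\qquad\text{for all }n\in\mathbb{N}.$$

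Now let $\Lambda$ be a finite maximal antichain and put $n:=\max\{|w|:w\in\Lambda\}$. Maximality forces every $u\in I^n$ to have a (necessarily unique, by the antichain property) prefix in $\Lambda$, so $(w,\tau)\mapsto w\tau$ is a bijection of $\{(w,\tau):w\in\Lambda,\ |\tau|=n-|w|\}$ onto $I^n$. Fix $w\in\Lambda$. Using $p_{w\tau}=p_w p_\tau$, the estimate $T_{w\tau}\ge M^{-1}T_wT_\tau$ from Lemma~\ref{QDlem11}, and the lower bound above,
$$\sum_{|\tau|=n-|w|}(p_{w\tau}T_{w\tau}^r)^{\alpha}\ \ge\ M^{-r\alpha}(p_wT_w^r)^{\alpha}\sum_{|\tau|=n-|w|}(p_\tau T_\tau^r)^{\alpha}\ \ge\ M^{-r\alpha}(p_wT_w^r)^{\alpha}.$$
Summing over $w\in\Lambda$, identifying the double sum with $\sum_{|u|=n}(p_uT_u^r)^{\alpha}$ via the bijection, and invoking the upper bound of Lemma~\ref{QDlem12}, one gets
$$M^{-r\alpha}\sum_{w\in\Lambda}(p_wT_w^r)^{\alpha}\ \le\ \sum_{|u|=n}(p_uT_u^r)^{\alpha}\ \le\ M^{r\alpha},$$
so that $\sum_{w\in\Lambda}(p_wT_w^r)^{\alpha}\le M^{2r\alpha}=M^{2r\sigma_r/(r+\sigma_r)}$, which is the assertion with $C=M=C_2^2$ (and hence with any larger prefixed constant).

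The routine parts are the bounded-distortion bookkeeping and keeping the composition order in $f_{w\tau}=f_w\circ f_\tau$, $(Df_{w\tau})(x)=(Df_w)(f_\tau(x))(Df_\tau)(x)$ consistent with the paper's conventions (this ultimately does not affect the estimate, since only $p_w$ and $T_w$ survive into the final inequality). The two points that actually need care are the combinatorial bijection between $I^n$ and the ``padded'' antichain, which rests on maximality and incomparability, and the sharpness of the constant: replacing $\sum_{|w|=n}(p_wT_w^r)^{\alpha}\ge1$ by the weaker two-sided bound of Lemma~\ref{QDlem12} costs an extra factor $M^{r\alpha}$ and would give exponent $3$ rather than $2$, so the eigenfunction-at-its-maximum step is precisely what matches the exponent in the statement.
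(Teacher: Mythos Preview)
Your proof is correct and takes a genuinely different route from the paper's. The paper sets $n=\min\{|w|:w\in\Lambda\}$, truncates each $w\in\Lambda$ to its length-$n$ prefix $\xi$, invokes the monotonicity $p_w\le p_\xi$, $T_w\le T_\xi$, and then applies Lemma~\ref{QDlem12} directly. You instead take $n=\max\{|w|:w\in\Lambda\}$, \emph{extend} each $w$ out to length $n$ via the bijection $(w,\tau)\mapsto w\tau$ onto $I^n$, and combine the sub/super-multiplicativity of Lemma~\ref{QDlem11} with the eigenfunction lower bound $\sum_{|\tau|=m}(p_\tau T_\tau^r)^\alpha\ge1$.

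Your argument is in fact the more robust of the two: the paper's step $\sum_{w\in\Lambda}(p_\xi T_\xi^r)^\alpha\le\sum_{|\xi|=n}(p_\xi T_\xi^r)^\alpha$ tacitly treats the truncation map $w\mapsto\xi$ as injective, but distinct words in $\Lambda$ can share a common length-$n$ prefix (e.g.\ $\Lambda=\{1,21,22\}$ with $n=1$, where $21$ and $22$ both truncate to $2$), so that inequality as written over-counts. Your bijection between the padded antichain and $I^n$ sidesteps this entirely. The only cost is that your constant comes out as $M^{2r\alpha}$ rather than the paper's intended $C_2^{2r\alpha}=M^{r\alpha}$; this is harmless for every later use of the lemma.
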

\begin{proof}
Define $n=\min\{|w|: w \in \Lambda\}.$ By using the fact that $\Lambda$ consists of no empty word, we get $n \ge 1.$ Now, for each $w \in \Lambda$ there exists $\xi \in I^*$ such that $|\xi|=n$ and $ w= \xi \tau,$ for some $\tau \in I^*.$ Again, using $p_w \le p_{\xi}$, $T_w \le T_{\xi}$ and Lemma \ref{QDlem12}, we have
 $$ \sum_{w \in \Lambda} (p_w T_w^r)^{\frac{\sigma_r}{r+\sigma_r}} \le  \sum_{w \in \Lambda} (p_\xi T_{\xi}^r)^{\frac{\sigma_r}{r+\sigma_r}} \le \sum_{|\xi|=n} (p_\xi T_{\xi}^r)^{\frac{\sigma_r}{r+\sigma_r}} \le  C^{\frac{2r\sigma_r}{r+\sigma_r}},$$
 and hence the proof.
\end{proof}
\begin{remark}
Note that the set $\Lambda_n:=\{w \in I^*: |w|=n\}$ is a finite maximal antichain and $\mu = \sum_{w \in \Lambda_n} p_w \mu \circ f_w^{-1}.$ On the other hand, for any given finite maximal antichain $\Lambda$, it can be shown (see \cite[Lemma $3.8$]{R2}, \cite{Pat}) that $\mu = \sum_{w \in \Lambda} p_w \mu \circ f_w^{-1}.$ Hence,  our results which are true for the set $\Lambda_n$ , is also true for any general finite maximal antichain $\Lambda.$
\end{remark}
\begin{lemma}\label{Qlem4}
Let $\Lambda$ be a finite maximal antichain and $n \in \mathbb{N}$ with $n \ge \text{Card}(\Lambda)$. Then, for each $0<r<+\infty,$ we have $V_{n,r}(\mu) \le C_2^r \inf\Big\{\sum_{w \in \Lambda}p_w T_w^r~ V_{n_w,r}(\mu): n_w \ge 1, ~ \sum_{w \in \Lambda} n_w \le n\Big\}.$
\end{lemma}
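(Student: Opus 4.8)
The plan is to combine the self-similarity of $\mu$ over the antichain $\Lambda$ with a cut-and-paste construction of a near-optimal quantizing set. Recall from the remark preceding the lemma that $\mu = \sum_{w \in \Lambda} p_w\, \mu \circ f_w^{-1}$ for an arbitrary finite maximal antichain $\Lambda$. Fix any tuple $(n_w)_{w \in \Lambda}$ of positive integers with $\sum_{w \in \Lambda} n_w \le n$; the hypothesis $n \ge \text{Card}(\Lambda)$ is exactly what guarantees that at least one such tuple exists, so that the infimum on the right-hand side is taken over a nonempty set. For each $w \in \Lambda$, choose an $n_w$-optimal set $A_w \subset X$ for $V_{n_w,r}(\mu)$ (legitimate since $X$ is compact; alternatively one takes an $\varepsilon$-nearly optimal set and lets $\varepsilon \downarrow 0$ at the end), so that $\text{Card}(A_w) \le n_w$ and $\int d(y, A_w)^r\, d\mu(y) = V_{n_w,r}(\mu)$. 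Put $A := \bigcup_{w \in \Lambda} f_w(A_w)$. Then $\text{Card}(A) \le \sum_{w \in \Lambda} \text{Card}(f_w(A_w)) \le \sum_{w \in \Lambda} n_w \le n$, so $A$ is admissible in the infimum defining $V_{n,r}(\mu)$.

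Next I would estimate $V_{n,r}(\mu) \le \int d(x, A)^r\, d\mu(x)$ by pushing the integral through the self-similarity identity and a change of variables:
\[
\int d(x, A)^r\, d\mu(x) = \sum_{w \in \Lambda} p_w \int d(f_w(y), A)^r\, d\mu(y).
\]
Since $f_w(A_w) \subseteq A$, for every $w$ and every $y \in X$ we have
\[
d(f_w(y), A) \le d\big(f_w(y), f_w(A_w)\big) = \inf_{a \in A_w} d(f_w(y), f_w(a)) \le C_2 T_w\, d(y, A_w),
\]
where the last step uses the upper distortion bound of Hypothesis \ref{new2} (equivalently the remark immediately following it) together with $(Df_w)(x) \le T_w$. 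Raising to the power $r$, integrating in $y$, and invoking the optimality of $A_w$ yields $\int d(f_w(y), A)^r\, d\mu(y) \le C_2^r T_w^r\, V_{n_w,r}(\mu)$, hence
\[
V_{n,r}(\mu) \le C_2^r \sum_{w \in \Lambda} p_w T_w^r\, V_{n_w,r}(\mu).
\]
Taking the infimum over all admissible tuples $(n_w)_{w \in \Lambda}$ produces exactly the asserted inequality.

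I do not expect a serious obstacle here: the content lies in correctly marshalling the ingredients rather than in any single hard estimate. The two points deserving attention are (i) the identity $\mu = \sum_{w \in \Lambda} p_w\, \mu \circ f_w^{-1}$ for a general finite maximal antichain, not merely for the level set $\Lambda_n = I^n$, which is supplied by the cited results and the preceding remark; and (ii) the bounded-distortion inequality $d(f_w(x), f_w(y)) \le C_2 T_w\, d(x,y)$, which is precisely what Hypothesis \ref{new2} — itself resting on the H\"older continuity of the $(Df_i)$ through Lemma \ref{Qlem2} — guarantees in this setting. The cardinality bookkeeping causes no trouble, since passing to images and unions can only decrease cardinality, and the compactness of $X$ removes any concern about the existence of the optimal sets $A_w$.
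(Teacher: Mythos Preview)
Your proof is correct and follows essentially the same approach as the paper: choose $n_w$-optimal sets $A_w$, push their images $f_w(A_w)$ together to form an admissible $n$-set, expand the integral via $\mu=\sum_{w\in\Lambda}p_w\,\mu\circ f_w^{-1}$, and then apply the distortion bound $d(f_w(x),f_w(y))\le C_2 T_w\,d(x,y)$ from Hypothesis~\ref{new2}. If anything, your write-up is slightly more careful than the paper's, since you explicitly note why $n\ge\text{Card}(\Lambda)$ makes the admissible set of tuples nonempty and remark on the existence of optimal sets.
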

\begin{proof}
We assume $n_w \ge 1,$ for each word $w \in \Lambda,$ and $\Sigma_{w \in \Lambda} n_w \le n.$ Let $A_w$ be an $n_w$-optimal set for $V_{n_w,r}(\mu)$, where $w \in \Lambda.$ Using $\text{Card}\big(\cup f_w(A_w)\big) \le n$ and $ \mu = \Sigma_{w \in \Lambda} p_w \mu \circ f_w^{-1}$, we estimate
\begin{equation*}
\begin{aligned}
V_{n,r}(\mu) & \le \int d(x, \cup f_w(A_w))^r d \mu(x)\\ & = \sum_{w \in \Lambda} p_w \int d(x, \cup f_w(A_w))^r d( \mu \circ f_w^{-1})(x)\\ & \le \sum_{w \in \Lambda} p_w \int d(f_w(x),  f_w(A_w))^r d \mu (x)\\ &
\le C_2^r\sum_{w \in \Lambda} p_w T_w^r \int d(x,  A_w)^r d \mu (x)\\ &
= C_2^r \sum_{w \in \Lambda} p_w T_w^r ~V_{m_w,r}(\mu).
\end{aligned}
\end{equation*}
Thus, the proof of the lemma is complete.
\end{proof}
\begin{prop}\label{Qlem5}
Assume that $0<r<+\infty$, and $\gs_r$ is as defined before. Then, \[\limsup_{n\to \infty} n~ e_{n,r}^{\sigma_r} (\mu)< + \infty.\]
\end{prop}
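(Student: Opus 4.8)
The plan is to estimate $V_{n,r}(\mu)$ from above by feeding a suitably chosen finite maximal antichain, with the trivial allocation $n_w\equiv1$, into the recursive inequality of Lemma~\ref{Qlem4}, and then to control the resulting weight sum $\sum_w p_wT_w^r$ through the antichain estimate of Lemma~\ref{Qlem3}. We may assume the IFS is non-degenerate, i.e. $N\ge2$ and all $p_i>0$, so that $\sigma_r>0$; when $N=1$ the measure $\mu$ is a point mass, $V_{n,r}(\mu)=0$, and there is nothing to prove.

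Fix $\delta\in(0,1)$ and set
\[
\Lambda(\delta):=\bigl\{\,w\in I^*\ :\ p_wT_w^r\le\delta<p_{w^*}T_{w^*}^r\,\bigr\}.
\]
Since $p_{w\xi}T_{w\xi}^r<p_wT_w^r$ for each $\xi\in I$ (because $T_{w\xi}\le T_wT_\xi$, as in the proof of Lemma~\ref{QDlem11}, and $T_\xi<1$ by Remark~\ref{Qrem2}) and $p_wT_w^r\le(\max_iT_i)^{r|w|}\to0$, the set $\Lambda(\delta)$ is a finite maximal antichain. Moreover, the bounded distortion of Lemma~\ref{Qlem2}, together with $p_{w_{|w|}}\ge\min_ip_i$, yields a fixed constant $a\in(0,1)$, independent of $\delta$, with $a\delta<p_wT_w^r\le\delta$ for every $w\in\Lambda(\delta)$. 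Plugging the lower estimate $p_wT_w^r>a\delta$ into Lemma~\ref{Qlem3},
\[
\text{Card}\bigl(\Lambda(\delta)\bigr)\,(a\delta)^{\frac{\sigma_r}{r+\sigma_r}}\ \le\ \sum_{w\in\Lambda(\delta)}\bigl(p_wT_w^r\bigr)^{\frac{\sigma_r}{r+\sigma_r}}\ \le\ K_0,
\]
$K_0$ being the constant appearing in Lemma~\ref{Qlem3}; hence $\text{Card}(\Lambda(\delta))\le K\,\delta^{-\sigma_r/(r+\sigma_r)}$ with $K:=K_0\,a^{-\sigma_r/(r+\sigma_r)}$.

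For $n\ge N$, put $\delta_n:=\inf\{\,\delta\in(0,1):\text{Card}(\Lambda(\delta))\le n\,\}$. This set is non-empty (for $\delta$ close to $1$ one has $\Lambda(\delta)=I$), and since $\delta\mapsto\text{Card}(\Lambda(\delta))$ is non-increasing and right-continuous one gets $\text{Card}(\Lambda(\delta_n))\le n$, while $\text{Card}(\Lambda(\delta))>n$ for $\delta<\delta_n$; combined with the cardinality bound above, the latter forces $\delta_n\le(K/n)^{(r+\sigma_r)/\sigma_r}$. Applying Lemma~\ref{Qlem4} to $\Lambda=\Lambda(\delta_n)$ with $n_w=1$ for all $w$ (admissible since $n\ge\text{Card}(\Lambda(\delta_n))$), and using $p_wT_w^r\le\delta_n$, $\text{Card}(\Lambda(\delta_n))\le n$, and $V_{1,r}(\mu)\le(\text{diam}\,X)^r<\infty$, we obtain
\[
V_{n,r}(\mu)\ \le\ C_2^r\sum_{w\in\Lambda(\delta_n)}p_wT_w^r\,V_{1,r}(\mu)\ \le\ C_2^r(\text{diam}\,X)^r\,n\,\delta_n\ \le\ C_2^r(\text{diam}\,X)^r\,K^{\frac{r+\sigma_r}{\sigma_r}}\,n^{-\frac{r}{\sigma_r}}.
\]
Consequently $n\,e_{n,r}(\mu)^{\sigma_r}=n\,V_{n,r}(\mu)^{\sigma_r/r}$ stays bounded as $n\to\infty$, which is the assertion.

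The only genuinely delicate point is the bookkeeping around $\delta_n$: one must verify that the infimum is ``attained'' in the sense that $\text{Card}(\Lambda(\delta_n))\le n$ — which rests on right-continuity in $\delta$ of $\text{Card}(\Lambda(\delta))$, itself a consequence of the fact that $\Lambda(\delta)$ changes only at the locally finite set of values $\{p_wT_w^r:w\in I^*\}$ — and that both halves of the distortion estimate $a\delta<p_wT_w^r\le\delta$ on $\Lambda(\delta)$ are genuinely available; the upper half is built into the definition and the lower half is exactly the bounded distortion of Lemma~\ref{Qlem2}. Granting these, the remainder is a direct substitution into Lemmas~\ref{Qlem3} and~\ref{Qlem4}.
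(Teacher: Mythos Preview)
Your proof is correct and follows essentially the same route as the paper: build a finite maximal antichain by thresholding $p_wT_w^r$, bound its cardinality through Lemma~\ref{Qlem3}, and feed it into the recursive inequality of Lemma~\ref{Qlem4}. The only differences are cosmetic --- the paper allocates $n_w=k$ for a general fixed $k$ rather than your $n_w=1$, and it chooses the cut-off level directly as an explicit function of $n$ instead of your infimum-plus-right-continuity construction of $\delta_n$ (which works, but is more machinery than needed: taking $\delta=(K/n)^{(r+\sigma_r)/\sigma_r}$ outright already yields $\text{Card}(\Lambda(\delta))\le n$).
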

\begin{proof}
Define $c = \min\{(p_w T_w^r)^{\frac{\sigma_r}{r+ \sigma_r}}: w \in I\}.$ By Remark \ref{Qrem2}, it is simple to see the existence of such a constant $c$ such that $0< c < 1.$ Further, with the help of Lemma \ref{Qlem2}, we have $$ c \le (p_w T_w^r)^{\frac{\sigma_r}{r+ \sigma_r}} \le C_1^{\frac{r \sigma_r}{r+ \sigma_r}} (p_w R_w^r)^{\frac{\sigma_r}{r+ \sigma_r}} .$$
Now, for a given (fixed) $k \in \mathbb{N}$, there exists $n \in \mathbb{N}$ such that $$ \frac{k}{n} (C_1C_2)^{\frac{2r\sigma_r}{r+ \sigma_r}} < c^2.$$
Here we let $c'= c^{-1}   \frac{k}{n} (C_1 C_2)^{\frac{2r\sigma_r}{r+ \sigma_r}} .$ Since $0< c< 1,$ we get $0< c' < 1.$ Consider $\Lambda(c')= \{w \in I^*: (p_w T_w^r)^{\frac{\sigma_r}{r+ \sigma_r}} < c' \le (p_{w^*} T_{w^*}^r)^{\frac{\sigma_r}{r+ \sigma_r}}\} .$ It is easy to observe that $\Lambda(c')$ is a finite subset of $I^*$ and satisfies the definition of a finite maximal antichain. In the light of Lemma \ref{Qlem3}, we obtain
\begin{equation*}
\begin{aligned}
C_2^{\frac{2r\sigma_r}{r+ \sigma_r}} & \ge \sum_{w \in \Lambda(c')} (p_w T_w^r)^{\frac{\sigma_r}{r+ \sigma_r}} \\ & \ge \sum_{w \in \Lambda(c')} (p_w R_w^r)^{\frac{\sigma_r}{r+ \sigma_r}} \\ & \ge \sum_{w \in \Lambda(c')} (p_{w^*} R_{w^*}^r)^{\frac{\sigma_r}{r+ \sigma_r}} (p_{w_{|w|}} R_{w_{|w|}}^r)^{\frac{\sigma_r}{r+ \sigma_r}} \\ & \ge \sum_{w \in \Lambda(c')} C_1^{\frac{-r\sigma_r}{r+ \sigma_r}}(p_{w^*} T_{w^*}^r)^{\frac{\sigma_r}{r+ \sigma_r}} (p_{w_{|w|}} R_{w_{|w|}}^r)^{\frac{\sigma_r}{r+ \sigma_r}} \\ & \ge C_1^{\frac{-2r\sigma_r}{r+ \sigma_r}} cc'~ \text{Card}(\Lambda(c')).
\end{aligned}
\end{equation*}
This in turn yields $$ \text{Card}(\Lambda(c')) \le (cc')^{-1} (C_1 C_2)^{\frac{2r\sigma_r}{r+ \sigma_r}} = \frac{n}{k}.$$
 Therefore, Lemma \ref{Qlem4} produces
\begin{equation*}
\begin{aligned}
V_{n,r}(\mu) & \le C_2^r\sum_{w \in \Lambda(c')} p_w T_w^r V_{k,r}(\mu) \\ & =  C_2^r \sum_{w \in \Lambda(c')} (p_w T_w^r)^{\frac{\sigma_r}{r+ \sigma_r}} (p_w T_w^r)^{\frac{r}{r+ \sigma_r}} V_{k,r}(\mu) \\ & = C_2^r \sum_{w \in \Lambda(c')} (p_w T_w^r)^{\frac{\sigma_r}{r+ \sigma_r}} (c')^{\frac{r}{ \sigma_r}} V_{k,r}(\mu) \\ & \le C_2^{\frac{r^2+3r\sigma_r}{r+ \sigma_r}} (c')^{\frac{r}{ \sigma_r}} V_{k,r}(\mu)\\ & =  C_2^{\frac{r^2+3r\sigma_r}{r+ \sigma_r}} \Big(c^{-1}(C_1 C_2)^{\frac{2r\sigma_r}{r+ \sigma_r}} \frac{k}{n}\Big)^{\frac{r}{ \sigma_r}} V_{k,r}(\mu).
\end{aligned}
\end{equation*}
Consequently, $n V_{n,r}^{\frac{\sigma_r}{r}}(\mu) \le c^{-1}C_1^{\frac{2r\sigma_r}{r+ \sigma_r}} C_2^{3\gs_r} k V_{k,r}^{\frac{\sigma_r}{r}}(\mu).$
The above inequality is true for all $n \in \mathbb{N}$ except finitely many, which further gives $$ \limsup_{n \to \infty } n~ e_{n,r}^{\sigma_r} (\mu)\le c^{-1}C_1^{\frac{2r\sigma_r}{r+ \sigma_r}} C_2^{3\gs_r} k ~e_{k,r}^{\sigma_r}(\mu).$$ Since $k$ is fixed, the last inequality yields the required result.
\end{proof}
\begin{lemma}\label{Qlem6}
Let $\Lambda$ be a finite maximal antichain. Then, there exists $n_0$ depending on $\Lambda$ such that for every $n \ge n_0$ there exists a set of natural numbers $\{n_w:n_w(n)\}_{w \in \Lambda }$ satisfying $\sum_{w \in \Lambda}n_w \leq n$ and
$u_{n, r}(\mu) \ge C_2^{-r} \sum_{w \in \Lambda} p_w R_w^r u_{n_w,r}(\mu).$
\end{lemma}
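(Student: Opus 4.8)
The plan is to argue dually to Lemma~\ref{Qlem4}: rather than building a competitor for $V_{n,r}$ out of optimal sets over the cylinders, I start from an $n$-optimal set $\alpha$ for $u_{n,r}(\mu)$ and restrict it to the cylinders indexed by $\Lambda$. Three facts form the backbone. First, $\mu=\sum_{w\in\Lambda}p_w\,\mu\circ f_w^{-1}$ (the remark following Lemma~\ref{Qlem3}). Second, the sets $\{f_w(G)\}_{w\in\Lambda}$ are pairwise disjoint: distinct $w,\tau\in\Lambda$ first differ in some coordinate, say $w=v i\cdots$, $\tau=v j\cdots$ with $i\ne j$, whence $f_w(G)\subseteq f_v(f_i(G))$ and $f_\tau(G)\subseteq f_v(f_j(G))$ are disjoint because $f_i(G)\cap f_j(G)=\varnothing$ and $f_v$ is injective (Lemma~\ref{new22}). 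Third, $\mathrm{supp}\,\mu=E\subseteq\overline G$, since $\overline G$ is a nonempty compact set with $\bigcup_i f_i(\overline G)\subseteq\overline{\bigcup_i f_i(G)}\subseteq\overline G$ and hence contains the attractor.

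Expanding with the first fact, $u_{n,r}(\mu)=\sum_{w\in\Lambda}p_w\int d(f_w(x),\alpha\cup G^c)^r\,d\mu(x)$. For $w\in\Lambda$ put $\alpha_w:=f_w^{-1}(\alpha\cap f_w(G))\subseteq G$ (meaningful by the injectivity of $f_w$) and $n_w:=\mathrm{Card}(\alpha_w)=\mathrm{Card}(\alpha\cap f_w(G))$; by the disjointness in the second fact, $\sum_{w\in\Lambda}n_w\le\mathrm{Card}(\alpha)\le n$. The crux is the pointwise bound
\[
d(f_w(x),\alpha\cup G^c)\ \ge\ C_2^{-1}R_w\,d(x,\alpha_w\cup G^c)\qquad\text{for every }x\in E .
\]
When $f_w(x)\notin G$ both sides vanish (indeed $f_w(x)\in f_w(\overline G)\subseteq\overline G$ forces $f_w(x)\in\partial G\subseteq G^c$, and then $x\notin G$ too, so $x\in E\cap\partial G\subseteq G^c$). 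When $f_w(x)\in G$, I use the inclusion
\[
\alpha\cup G^c\ \subseteq\ \big(\alpha\cap f_w(G)\big)\cup\big(X\setminus f_w(G)\big)\ =\ f_w\big(\alpha_w\cup G^c\big)\cup\big(X\setminus f_w(X)\big),
\]
valid because $f_w(G)\subseteq G$ and $f_w$ is injective; on the piece $f_w(\alpha_w\cup G^c)$ the two-sided distortion estimate $C_2^{-1}R_w\,d(x,y)\le d(f_w(x),f_w(y))\le C_2T_w d(x,y)$ (Hypothesis~\ref{new2} and the remark after it) immediately yields the factor $C_2^{-1}R_w$. Granting the control of the remaining piece $X\setminus f_w(X)$ described below, one then raises to the $r$th power, integrates $d\mu$, and uses $\int d(x,\alpha_w\cup G^c)^r\,d\mu\ge u_{n_w,r}(\mu)$ to conclude $u_{n,r}(\mu)\ge C_2^{-r}\sum_{w\in\Lambda}p_wR_w^r\,u_{n_w,r}(\mu)$.

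The numbers $n_w$ are genuine naturals precisely when $\alpha$ meets every $f_w(G)$, $w\in\Lambda$, which is the role of the threshold $n_0=n_0(\Lambda)$. I would argue by contradiction in the usual way: if $\alpha\cap f_{w_0}(G)=\varnothing$ for some $w_0$, then since $\mu(f_{w_0}(E))\ge p_{w_0}>0$ and $f_{w_0}(E)$ is a compact subset of $G$ of fixed diameter at most $C_2T_{w_0}\mathrm{diam}(X)$ lying at a positive distance from $\alpha\cup G^c$, one may move a point of $\alpha$ out of a cylinder that already carries at least two points into $f_{w_0}(G)$ and strictly decrease $\int d(x,\alpha\cup G^c)^r\,d\mu$; once $n$ exceeds a bound depending only on $\mathrm{Card}(\Lambda)$ and the finite data $p_w,T_w,R_w$ ($w\in\Lambda$) such a crowded cylinder must exist, contradicting optimality of $\alpha$. (This is the analogue, for $u_{n,r}$, of the elementary no-empty-cell property of optimal quantizers; cf.\ \cite{GL1,R2}.) With $n\ge n_0$ in force, $\{n_w\}_{w\in\Lambda}$ is an admissible choice and the displayed inequality is the assertion.

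I expect the main obstacle to be the pointwise estimate, and within it the set $X\setminus f_w(X)$, which has no counterpart in the classical Euclidean self-similar case (there each $f_w$ is a global bijection, so this set is empty and the inclusion above collapses to $\alpha\cup G^c\subseteq f_w(\alpha_w\cup G^c)$). In the general metric setting one must show that, for $x\in E$ with $f_w(x)\in G$, the point $f_w(x)$ stays away from $X\setminus f_w(X)$, quantitatively $d(f_w(x),X\setminus f_w(X))\ge C_2^{-1}R_w\,d(x,G^c)$, so that this piece never realizes the relevant infimum; this is where the hypotheses $f_w(G)\subseteq G$, the injectivity from Lemma~\ref{new22}, the inclusion $E\subseteq\overline G$, and the distortion estimate must all be used together. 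The $n_0$-argument is routine but, as always with lower bounds in quantization, needs to be phrased with some care.
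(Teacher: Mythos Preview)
Your overall plan---localize an $n$-optimal set $\alpha$ for $u_{n,r}(\mu)$ to the cylinders $f_w(G)$, use the disjointness of $\{f_w(G)\}_{w\in\Lambda}$ to get $\sum_w n_w\le n$, then pull back through $f_w^{-1}$ with the distortion bound---is exactly the paper's plan. The genuine gap is in your threshold argument. You assert that $f_{w_0}(E)$ is ``a compact subset of $G$\dots lying at a positive distance from $\alpha\cup G^c$,'' but from $E\subseteq\overline G$ you only get $f_{w_0}(E)\subseteq f_{w_0}(\overline G)\subseteq\overline G$, and nothing prevents $f_{w_0}(E)$ from meeting $\partial G\subseteq G^c$; when it does, the distance to $G^c$ is zero and your point-moving contradiction evaporates. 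This is precisely where the \emph{strong} OSC must enter, and your sketch never uses it. The paper's route is different and simpler: by SOSC there is $\xi\in I^*$ with $f_\xi(X)\subset G$, so $c:=d(f_\xi(X),G^c)>0$, and distortion gives $d\big(f_w(f_\xi(X)),f_w(G^c)\big)\ge C_2^{-1}R_{\min}c$ for every $w\in\Lambda$. Setting $c_n:=\max_{x\in E}d(x,A_n\cup G^c)$ and using $c_n\to 0$, one picks $n_0$ with $c_n<C_2^{-1}R_{\min}c$ for $n\ge n_0$; then the nearest point in $A_n\cup G^c$ to any $x\in f_w(f_\xi(E))$ is forced into $A_n\cap f_w(G)$, so $n_w\ge 1$ automatically---no optimality-based cell-swapping at all.

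On the piece $X\setminus f_w(X)$: you are right to flag it, but your hoped-for bound $d(f_w(x),X\setminus f_w(X))\ge C_2^{-1}R_w\,d(x,G^c)$ is not something to expect in a general compact metric space (points outside $f_w(X)$ can accumulate on $f_w(X)$ with no relation to $d(x,G^c)$). The paper does not attempt such an estimate; instead, in the $w$-summand it passes from $A_n\cup G^c$ to the set $A_{n,w}\cup f_w(G^c)\subseteq f_w(X)$, so that the nearest point $x'$ automatically has a preimage $x_*=f_w^{-1}(x')\in f_w^{-1}(A_{n,w})\cup G^c$ and the distortion bound applies directly to $d(f_w(x),f_w(x_*))$. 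That replacement, rather than an estimate on $X\setminus f_w(X)$, is how the paper absorbs the difficulty you isolated.
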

\begin{proof}
Assume that $G$ is the open set which satisfies the strong open set condition. Since $E \cap G  \ne \emptyset,$ there exists $\xi \in I^*$ such that $f_{\xi}(X) \subset G.$ Let $c= d(f_{\xi}(X),G^c)$ and $R_{\min}= \min\{R_w: w \in \Lambda\}.$ Now, we have $$ d(f_w(f_{\xi}(X)), f_w(G^c)) \ge C_2^{-1} R_w ~ d(f_{\xi}(X),G^c)\ge C_2^{-1} R_{\min} c,$$
which further gives $d(x,G^c)\ge d(x,f_w(G^c)) \ge C_2^{-1} R_{\min} c, $ for every $x \in f_w(f_{\xi}(X)).$ Next, we take $A_n$ as an $n$-optimal set for $u_{n,r}(\mu),$ and $c_n= \max\{d(x, A_n \cup G^c): x \in E\}.$ Then, there exists $n_0$ such that $c_n < C_2^{-1} R_{\min} c$ for every $n \ge n_0, $ because $c_n $ tends to zero as $n$ tends to infinity.
Let $n \ge n_0$ and $x \in f_w(f_{\xi}(E)).$ Then,  there exists $x_a \in A_n \cup G^c$ satisfying the following $$d(x, A_n \cup G^c) = d(x,x_a) \le c_n < C_2^{-1} R_{\min} c.$$ This together with $d(x,f_w(G^c)) \ge C_2^{-1} R_{\min} c, $ implies that $ x_a \in f_w(G).$ Hence, defining $A_{n,w}:= A_n \cap f_w(G),$ we have $n_w=\text{Card}(A_{n,w}) \ge 1$ and $\sum_{w \in I^n} n_w \le n.$ To proceed further, we need to prove the following claim: for any $x \in E$ there exists $x' \in A_{n,w} \cup f_w(G^c)$ such that $d\big(f_w(x),A_{n,w} \cup f_w(G^c)\big) = d(f_w(x),x').$ For the sake of contradiction, assume that the claim is not true. That is, $x' \notin A_{n,w} \cup f_w(G^c).$ Then,  $x' \notin A_n \cap f_w(G)$ and $x' \notin f_w(G^c),$ that is, $x_*:= f_w^{-1}(x') \in X$ is such that $x_* \notin G $ and $ x_* \notin G^c,$ a contradiction. Hence the claim is true, and
\begin{equation*}
\begin{aligned}
d\big(f_w(x),A_{n,w} \cup f_w(G^c)\big) & = d\big(f_w(x), f_w(x_*)\big) \\ & \ge C_2^{-1} R_w ~d(x,x_*) \\ & \ge C_2^{-1} R_w ~d\big(x, f_w^{-1}(A_{n,w})\cup G^c\big).
\end{aligned}
\end{equation*}
By routine calculations, we have
\begin{equation*}
\begin{aligned}
u_{n,r}(\mu) & = \int d\big(x,A_n \cup G^c\big)^r d \mu(x) \\ & = \sum_{w \in \Lambda} p_w \int d(f_w(x),A_n \cup G^c)^r d \mu(x) \\ & \ge \sum_{w \in \Lambda} p_w\int d\big(f_w(x), A_n \cup f_w(G^c)\big)^r d \mu(x) \\ & = \sum_{w \in \Lambda} p_w \int d\big(f_w(x), A_{n_w} \cup f_w(G^c)\big)^r d \mu(x) \\ & \ge C_2^{-r} \sum_{w \in \Lambda} p_w R_w^r \int d\big(x, f_w^{-1}(A_{n_w}\big) \cup G^c)^r d \mu(x) \\ & \ge C_2^{-r} \sum_{w \in \Lambda} p_w R_w^r ~u_{n_w,r}(\mu),
\end{aligned}
\end{equation*}
completing the proof.
\end{proof}
\begin{prop}\label{Qlem7}
Let $0<r < + \infty$ and $ 0 < l< \sigma_r.$ Then,
\[\liminf_{n \to \infty} n e_{n,r}^l (\mu) > 0.\]
\end{prop}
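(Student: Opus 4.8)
The plan is to derive the result from the auxiliary quantity $u_{n,r}$ by a self-referential induction driven by Lemma~\ref{Qlem6}. Since $u_{n,r}(\mu)^{1/r}\le e_{n,r}(\mu)$, we have $n\,e_{n,r}^{\,l}(\mu)\ge n\,u_{n,r}^{\,l/r}(\mu)$ for every $n$, so it suffices to prove $\liminf_{n\to\infty} n\,u_{n,r}^{\,l/r}(\mu)>0$. (If $N=1$ then $\sigma_r=0$ and there is no admissible $l$, so we may assume $N\ge2$.)

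Set $t:=\frac{l}{l+r}\in(0,1)$. Because $l<\sigma_r$ and $\sigma\mapsto\text{spr}(L_{\sigma,r})$ is strictly decreasing with $\text{spr}(L_{\sigma_r,r})=1$ (Lemma~\ref{QDstrdec}), we have $\text{spr}(L_{l,r})>1$. Iterating the eigenvector identity~\eqref{Qeqn0} with exponent $l$, using the chain rule of Lemma~\ref{Chain_lemma} for $(Df_w)$ and the uniform bounded distortion of Lemma~\ref{Qlem2} to pass between $(Df_w)(x)$ and $R_w$ with a $w$-independent constant, yields $\tfrac1m\log\sum_{|w|=m}(p_wR_w^{\,r})^{t}\to\log\text{spr}(L_{l,r})>0$, exactly as in the two displayed limits following Lemma~\ref{QDstrdec}. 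Hence $\Sigma_m:=\sum_{|w|=m}(p_wR_w^{\,r})^{t}\to\infty$, and I would fix once and for all an $m$ with $\Sigma_m\ge C_2^{\,lr/(l+r)}$. Let $n_0=n_0(I^m)$ be the integer furnished by Lemma~\ref{Qlem6} for the finite maximal antichain $\Lambda=I^m$, and put $\eta:=u_{n_0,r}(\mu)^{l/r}$; here $\eta>0$ because $u_{n,r}(\mu)>0$ for every $n$ (the closure of a finite set together with $\overline{G^{c}}$ cannot carry all of $\mu$, whose support $E$ contains the infinite set $f_\xi(E)\subset G$, with $f_\xi(X)\subset G$ as in Lemma~\ref{Qlem6}).

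I would then prove, by strong induction on $n$, that $n\,u_{n,r}^{\,l/r}(\mu)\ge\eta$ for all $n\ge1$. For $1\le n\le n_0$ this is immediate from the monotonicity $u_{n,r}(\mu)\ge u_{n_0,r}(\mu)$. For $n>n_0$, Lemma~\ref{Qlem6} provides naturals $n_w\ge1$, $|w|=m$, with $\sum_{|w|=m}n_w\le n$ and $u_{n,r}(\mu)\ge C_2^{-r}\sum_{|w|=m}p_wR_w^{\,r}\,u_{n_w,r}(\mu)$; since $N^m\ge2$ each $n_w<n$, so the inductive hypothesis gives $u_{n_w,r}(\mu)\ge(\eta/n_w)^{r/l}$ and hence $u_{n,r}(\mu)\ge C_2^{-r}\eta^{r/l}\sum_{|w|=m}(p_wR_w^{\,r})\,n_w^{-r/l}$. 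The map $(n_w)\mapsto\sum_{|w|=m}(p_wR_w^{\,r})\,n_w^{-r/l}$ is convex and decreasing in each variable, so its minimum over $\{n_w>0:\sum n_w\le n\}$ is attained with $n_w$ proportional to $(p_wR_w^{\,r})^{t}$ and equals $n^{-r/l}\Sigma_m^{1/t}=n^{-r/l}\Sigma_m^{(l+r)/l}$; thus $\sum_{|w|=m}(p_wR_w^{\,r})\,n_w^{-r/l}\ge n^{-r/l}\Sigma_m^{(l+r)/l}$. Combining and raising to the power $l/r$ gives $n\,u_{n,r}^{\,l/r}(\mu)\ge C_2^{-l}\,\Sigma_m^{(l+r)/r}\,\eta\ge\eta$ by the choice of $m$, which closes the induction. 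Consequently $\liminf_{n\to\infty}n\,u_{n,r}^{\,l/r}(\mu)\ge\eta>0$, and the statement follows.

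The main obstacle is the input assembled in the second paragraph: converting $\text{spr}(L_{l,r})>1$ into the fact that the block sums $\Sigma_m$ are unbounded. This leans on the Perron–Frobenius/spectral‑radius machinery for the operators $L_{\sigma,r}$ from \cite{Nussbaum1} together with the uniformity of the distortion constant in Lemma~\ref{Qlem2}, and it is precisely where the hypothesis $l<\sigma_r$ is used. Everything else—the convex optimisation estimate and, above all, the bookkeeping that keeps the induction non‑circular (first fix $m$, then $n_0=n_0(I^m)$, then $\eta$)—is routine.
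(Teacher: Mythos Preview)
Your proof is correct and follows essentially the same approach as the paper's: fix a level $m$ with $\Sigma_m$ large, apply Lemma~\ref{Qlem6} to the antichain $I^m$, and run a strong induction on $n$ using the reverse H\"older/convex‐minimisation bound $\sum_{|w|=m}(p_wR_w^{\,r})\,n_w^{-r/l}\ge n^{-r/l}\Sigma_m^{(l+r)/l}$ to close the loop. Your bookkeeping is in fact tidier than the paper's---you correctly require $\Sigma_m\ge C_2^{\,lr/(l+r)}$ (rather than merely $\Sigma_m\ge1$) so that the factor $C_2^{-l}$ is absorbed and the induction genuinely returns to $\eta$, and you keep the roles of $m$, $n_0(I^m)$, and the running index $n$ cleanly separated.
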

\begin{proof}
In view of the uniqueness of $\sigma_r$, we have $$ \sum_{|w|=n} (p_w R_w^r)^{\frac{l}{r+l}} \to \infty ~ \text{as} ~ n\to \infty,$$
for $0< l < \sigma_r.$ Then,  there exists a sufficiently large number $n \in \mathbb{N}$ such that
\begin{equation}\label{Qeqn1}
 \sum_{|w|=n} (p_w R_w^r)^{\frac{l}{r+l}} \ge 1.
\end{equation}
 With the help of Lemma \ref{Qlem6}, there exists $n_0 \in \mathbb{N}$, and for every $n\ge n_0$ the set of numbers $\{n_w:=n_w(n)\}_{|w|=n}$ satisfying the output of the lemma. Let $c= \min\{n^{r/l}u_{n,r}(\mu): n \le n_0\}.$ It is simple to check that $u_{n,r}(\mu) >0$, and further which implies $c > 0.$ Assume that $m \ge n_0$ and $k^{r/l}u_{k,r}(\mu) \ge c, \forall ~k < n.$ Lemma \ref{Qlem6} produces
\begin{equation*}
\begin{aligned}
n^{r/l}u_{n,r}(\mu) & \ge C_2^{-r} n^{r/l} \sum_{|w|=n} p_w R_w^r u_{n_w,r}(\mu) \\ & = C_2^{-r} n^{r/l} \sum_{|w| =n} p_w R_w^r (n_w)^{-r/l}(n_w)^{r/l} u_{n_w,r}(\mu) \\ & \ge c ~C_2^{-r} \sum_{|w|=n} p_w R_w^r \Big(\frac{n_w}{n}\Big)^{-r/l}.
\end{aligned}
\end{equation*}
Further, appealing H\"older's inequality (with exponent less than $1$), we get
$$ n^{r/l} u_{n,r}(\mu) \ge c ~C_2^{-r} \Big(\sum_{|w|=n} (p_w R_w^r)^{\frac{l}{r+l}}\Big)^{1+\frac{r}{l}} \Big(\sum_{|w|=n} \Big(\frac{n_w}{n}\Big)^{(\frac{-r}{l})(\frac{-l}{r})}\Big)^{\frac{-r}{l}}.$$
Using Equation \ref{Qeqn1} and $\sum_{|w|=n} n_w \le n $, we have $n^{r/l} ~u_{n,r}(\mu) \ge c~ C_2^{-r}.$ Applying the process of induction, we obtain $\liminf_{n \to \infty} n ~u_{n,r}^{l/r}(\mu) \ge c^{l/r}~C_2^{-l} >0.$ This completes the proof.
\end{proof}
We are now ready to prove our main theorem.
\subsection{Proof of Theorem \ref{QDmainthm}}
Proposition~\ref{Qlem5} and a property of upper quantization dimension (see \cite[Proposition $11.3$]{GL1}) yield that $\overline{D_r} \le \sigma_r.$ Further, Proposition~\ref{Qlem7} and a property of lower quantization dimension (see \cite[Proposition $11.3$]{GL1}) produce $\underline{D_r}\ge \sigma_r.$ Hence, the proof is concluded.

\par
\begin{remark} Let $0<r<+\infty$. By Proposition~\ref{Qlem7}, it is known that if $0<l<\gs_r$, then $\liminf_{n \to \infty} n e_{n,r}^l (\mu) > 0$. It is still not known that whether $\liminf_{n \to \infty} n e_{n,r}^{\gs_r} (\mu) > 0$. Similar results can be seen for Gibbs-like
measures on cookie-cutter sets in \cite{R5}. In the sequel, we give an answer of it in the affirmative.
\end{remark}
To prove the aforementioned assertion, we need the next lemma.
\begin{lemma}\label{QDlem77}
Let $0<r<+\infty.$ Then, for every finite maximal antichain $\Lambda,$ we have
\[
   \sum_{w \in \Lambda} \big(p_w R_w^r\big)^{\frac{\sigma_r}{r +\sigma_r}} \ge M^{\frac{-6r \sigma_r}{r +\sigma_r}}.
\]
\end{lemma}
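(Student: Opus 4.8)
The plan is to deduce the estimate for $R_w$ from the analogous one for $T_w$ and then to compare the arbitrary antichain $\Lambda$ with the ``uniform'' antichain $\Lambda_{n_0}$ via the two-sided bounds already available. First, by Lemma~\ref{Qlem2} and $M = C_2^2 \ge C_1$ we have $R_w \ge T_w/M$ for every $w \in I^*$, hence
\[
\sum_{w \in \Lambda}\big(p_w R_w^r\big)^{\frac{\sigma_r}{r+\sigma_r}} \ \ge\ M^{-\frac{r\sigma_r}{r+\sigma_r}}\sum_{w \in \Lambda}\big(p_w T_w^r\big)^{\frac{\sigma_r}{r+\sigma_r}},
\]
so it suffices to bound $\sum_{w \in \Lambda}(p_w T_w^r)^{\frac{\sigma_r}{r+\sigma_r}}$ from below by a fixed negative power of $M$. (If $\sigma_r = 0$ every summand equals $1$ and the claim is trivial, so assume $\sigma_r > 0$.)

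Next I set $n_0 := \max\{|w| : w \in \Lambda\}$. Since $\Lambda$ is a finite maximal antichain, each word $\xi \in I^{n_0}$ has exactly one prefix in $\Lambda$, so $I^{n_0}$ is partitioned into the blocks $\{w\tau : |\tau| = n_0 - |w|\}$ indexed by $w \in \Lambda$ (the blocks with $|w| = n_0$ being the singletons $\{w\}$). Using $p_{w\tau} = p_w p_\tau$ and the quasi-multiplicativity $T_{w\tau} \le M\,T_w T_\tau$ from Lemma~\ref{QDlem11},
\[
\sum_{|\xi| = n_0}\big(p_\xi T_\xi^r\big)^{\frac{\sigma_r}{r+\sigma_r}} \ \le\ M^{\frac{r\sigma_r}{r+\sigma_r}}\sum_{w \in \Lambda}\big(p_w T_w^r\big)^{\frac{\sigma_r}{r+\sigma_r}}\ \sum_{|\tau| = n_0 - |w|}\big(p_\tau T_\tau^r\big)^{\frac{\sigma_r}{r+\sigma_r}}.
\]
By Lemma~\ref{QDlem12} each inner sum is at most $M^{\frac{r\sigma_r}{r+\sigma_r}}$ (the empty-word case contributing $1 \le M^{\frac{r\sigma_r}{r+\sigma_r}}$), while the left-hand side is at least $M^{-\frac{r\sigma_r}{r+\sigma_r}}$, again by Lemma~\ref{QDlem12}. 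Rearranging gives $\sum_{w \in \Lambda}(p_w T_w^r)^{\frac{\sigma_r}{r+\sigma_r}} \ge M^{-\frac{3r\sigma_r}{r+\sigma_r}}$, and feeding this into the first display yields
\[
\sum_{w \in \Lambda}\big(p_w R_w^r\big)^{\frac{\sigma_r}{r+\sigma_r}} \ \ge\ M^{-\frac{4r\sigma_r}{r+\sigma_r}} \ \ge\ M^{-\frac{6r\sigma_r}{r+\sigma_r}},
\]
the last inequality because $M > 1$; in particular the bound claimed in the lemma holds (with room to spare).

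The one point that needs care is the combinatorial claim that the words of length $n_0$ are exactly the extensions $w\tau$ of words $w \in \Lambda$, with the prefix $w$ unique — this is precisely where both the maximality and the antichain property of $\Lambda$ are used, and it is what allows the reduction to the level set $\Lambda_{n_0}$, for which Lemma~\ref{QDlem12} gives uniform control. Everything else is bookkeeping of the distortion constant $M$ (via Lemmas~\ref{Qlem2} and \ref{QDlem11}), together with the harmless handling of the empty word when $|w| = n_0$; since the argument in fact produces the exponent $-4r\sigma_r/(r+\sigma_r)$, one could alternatively record that sharper constant, but $-6r\sigma_r/(r+\sigma_r)$ suffices for the applications that follow.
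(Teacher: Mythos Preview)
Your proof is correct and follows essentially the same strategy as the paper's: partition the uniform level set $I^{n_0}$ (with $n_0=\max\{|w|:w\in\Lambda\}$) by the prefixes from $\Lambda$, use the quasi-multiplicativity of Lemma~\ref{QDlem11}, and invoke the two-sided bounds of Lemma~\ref{QDlem12}. The only cosmetic difference is that you first pass from $R_w$ to $T_w$ via Lemma~\ref{Qlem2} and work entirely with $T$, whereas the paper stays with $R_w$ and uses the derived inequality $R_wR_\xi\ge M^{-3}R_{w\xi}$; your bookkeeping is slightly tighter and yields the exponent $-4r\sigma_r/(r+\sigma_r)$ rather than $-6r\sigma_r/(r+\sigma_r)$, which (as you note) is harmless for the applications.
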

\begin{proof}
Using the definition of finite maximal antichain, we have a finite set of natural numbers $\{n_1,n_2,\dots, n_q\}$ satisfying the conditions $n_1< n_2< \dots< n_q$, and
\[
\Lambda = \Lambda_1 \cup \Lambda_2 \cup \dots \cup \Lambda_q,
\]
where $\Lambda_i=\{w \in \Lambda: |w|=n_i\}$ for every $i \in \{1,2,\dots,q\}.$ Note that for any $w, \gx \in I^\ast$, we have
 \[R_wR_\gx\geq C_1^{-2}T_wT_\gx\geq C_1^{-2} M^{-1}T_{w\gx}\geq M^{-3}T_{w\gx}\geq M^{-3}R_{w\gx}.\]
 Now, choose a natural number $m $ with $m \ge n_q.$ Then, Lemmas \ref{Qlem2} and \ref{QDlem12} dictate
\begin{equation*}
\begin{aligned}
\sum_{w \in \Lambda} (p_w R_w^r)^{\frac{\sigma_r}{r +\sigma_r}} & \ge \sum_{i=1}^q \sum_{w \in \Lambda_i} (p_w R_w^r)^{\frac{\sigma_r}{r +\sigma_r}} M^{\frac{-r \sigma_r}{r +\sigma_r}} \sum_{\xi \in \Lambda_{m-n_i}} (p_\xi R_\xi^r)^{\frac{\sigma_r}{r +\sigma_r}} \\ & \ge M^{\frac{-r \sigma_r}{r +\sigma_r}} \sum_{i=1}^q \sum_{w \in \Lambda_i} (p_w R_w^r)^{\frac{\sigma_r}{r +\sigma_r}}  \sum_{\xi \in \Lambda_{m-n_i}, w \prec \xi} (p_\xi R_\xi^r)^{\frac{\sigma_r}{r +\sigma_r}} \\ & = M^{\frac{-r \sigma_r}{r +\sigma_r}} \sum_{i=1}^q \sum_{w \in \Lambda_i} \sum_{\xi \in \Lambda_{m-n_i}, w \prec \xi} (p_w R_w^r)^{\frac{\sigma_r}{r +\sigma_r}}   (p_\xi R_\xi^r)^{\frac{\sigma_r}{r +\sigma_r}} \\ & \ge  M^{\frac{-4r \sigma_r}{r +\sigma_r}} \sum_{i=1}^q \sum_{w \in \Lambda_i} \sum_{\xi \in \Lambda_{m-n_i}} (p_{w \xi} R_{w \xi}^r)^{\frac{\sigma_r}{r +\sigma_r}} \\ & \ge  M^{\frac{-4r \sigma_r}{r +\sigma_r}}\sum_{w \in \Lambda_m} (p_{w } R_{w }^r)^{\frac{\sigma_r}{r +\sigma_r}}  \\ & \ge M^{\frac{-6r \sigma_r}{r +\sigma_r}}.
\end{aligned}
\end{equation*}
Thus, the proof of the lemma is obtained.
\end{proof}
We are now ready to give the following proposition.
\begin{prop}\label{Qlem71}
Let $0<r < + \infty.$ Then,
\[\liminf_{n \to \infty} n V_{n,r}^{\sigma_r / r} (\mu) > 0.\]
\end{prop}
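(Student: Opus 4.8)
The plan is to deduce the assertion for $V_{n,r}$ from the corresponding assertion for the constrained quantization error $u_{n,r}$, and to obtain the latter by running the argument of Proposition~\ref{Qlem7} at the critical exponent $l=\sigma_r$, with Lemma~\ref{QDlem77} taking over the role that the divergence $\sum_{|w|=n}(p_wR_w^r)^{l/(r+l)}\to\infty$ played for $l<\sigma_r$. Since $u_{n,r}(\mu)\le V_{n,r}(\mu)$ for every $n$, it suffices to prove $\liminf_{n\to\infty}n\,u_{n,r}^{\sigma_r/r}(\mu)>0$, i.e., that $u_{n,r}(\mu)\ge\eta\,n^{-r/\sigma_r}$ for all large $n$ and some $\eta>0$. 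Write $s=\frac{\sigma_r}{r+\sigma_r}\in(0,1)$, so that $r/\sigma_r=\frac{1-s}{s}$ and $\frac1s=1+\frac r{\sigma_r}$.

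First I would set up the recursion as in Proposition~\ref{Qlem7}. For a finite maximal antichain $\Lambda$ — taken either as $I^n$ or, more flexibly, as a threshold antichain $\Lambda(\epsilon)=\{w\in I^*:(p_wR_w^r)^s<\epsilon\le(p_{w^*}R_{w^*}^r)^s\}$ — Lemma~\ref{Qlem6} produces $n_0=n_0(\Lambda)$ and, for every $n\ge n_0$, integers $n_w\ge1$ with $\sum_{w\in\Lambda}n_w\le n$ and $u_{n,r}(\mu)\ge C_2^{-r}\sum_{w\in\Lambda}p_wR_w^r\,u_{n_w,r}(\mu)$. Substituting an inductive bound $u_{k,r}(\mu)\ge\eta\,k^{-r/\sigma_r}$ for $k<n$ (legitimate since each $n_w<n$ when $\text{Card}(\Lambda)\ge2$) and applying H\"older's inequality with exponent $s<1$ and conjugate exponent $-\sigma_r/r$, exactly as in the proof of Proposition~\ref{Qlem7}, one is led to
\[
n^{r/\sigma_r}u_{n,r}(\mu)\ \ge\ C_2^{-r}\,\eta\,\Big(\sum_{w\in\Lambda}(p_wR_w^r)^s\Big)^{1/s}\Big(\sum_{w\in\Lambda}\tfrac{n_w}{n}\Big)^{-r/\sigma_r}\ \ge\ C_2^{-r}\,\eta\,\Big(M^{-\frac{6r\sigma_r}{r+\sigma_r}}\Big)^{1/s},
\]
the last inequality combining $\sum_{w\in\Lambda}n_w\le n$ with the uniform bound of Lemma~\ref{QDlem77}.

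The main obstacle is that, unlike in Proposition~\ref{Qlem7}, the sum $\sum_{w\in\Lambda}(p_wR_w^r)^s$ is no longer forced to be $\ge1$: Lemma~\ref{QDlem77} only bounds it below by the fixed constant $M^{-6r\sigma_r/(r+\sigma_r)}\le1$, so the multiplicative factor $C_2^{-r}\big(M^{-6r\sigma_r/(r+\sigma_r)}\big)^{1/s}$ is in general strictly below $1$ and a one-step induction with a fixed antichain does not close. Nor does iterating the recursion over several levels at once help, since the $d$-fold concatenation of $\Lambda$ is again a finite maximal antichain to which Lemma~\ref{QDlem77} applies with the \emph{same} constant, so the accumulated factor only deteriorates with the number of levels. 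Overcoming this is the crux of the proof: one has to let the antichain depend on the scale and exploit the two-sided estimate $\text{Card}(\Lambda(\epsilon))\asymp\epsilon^{-1}$ with all terms $(p_wR_w^r)^s$ comparable to $\epsilon$ on $\Lambda(\epsilon)$ — the upper bound coming from Lemma~\ref{Qlem3} (via $R_w\le T_w$) and the lower one from Lemma~\ref{QDlem77} — so that the geometric growth of $\text{Card}(\Lambda(\epsilon))$ exactly offsets the decay of the constant, as it does automatically in the self-similar case $C_2=M=1$. The delicate quantitative point, which I expect to be the heart of the argument, is to reconcile the choice $\epsilon=\epsilon(n)$ with the $\Lambda$-dependent threshold $n_0(\Lambda)$ of Lemma~\ref{Qlem6}, so that the recursion may genuinely be applied at scale $n$; once the interplay between $\epsilon$, $\text{Card}(\Lambda(\epsilon))$, $n_0(\Lambda(\epsilon))$ and $n$ is balanced, the rest is the reverse-H\"older and induction bookkeeping already carried out in Propositions~\ref{Qlem5} and~\ref{Qlem7}, and it delivers $u_{n,r}(\mu)\ge\eta_0\,n^{-r/\sigma_r}$ for all large $n$, whence $\liminf_{n\to\infty}n\,V_{n,r}^{\sigma_r/r}(\mu)\ge\eta_0^{\sigma_r/r}>0$.
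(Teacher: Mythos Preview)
Your opening --- reduce to $u_{n,r}$, invoke the recursion of Lemma~\ref{Qlem6}, and run the reverse-H\"older induction of Proposition~\ref{Qlem7} with Lemma~\ref{QDlem77} supplying the lower bound on $\sum_{w\in\Lambda}(p_wR_w^r)^{s}$ --- is exactly what the paper's one-line proof prescribes, so up to that point you have faithfully unpacked it.

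You then go further than the paper by noting that at $l=\sigma_r$ the induction does not close. This is correct: in Proposition~\ref{Qlem7} one may choose the fixed antichain $I^N$ with $N$ so large that $\big(\sum_{|w|=N}(p_wR_w^r)^{l/(r+l)}\big)^{(r+l)/l}\ge C_2^{\,r}$, which cancels the $C_2^{-r}$ coming from Lemma~\ref{Qlem6} and makes the strong induction self-sustaining. At the critical exponent the sum is two-sidedly bounded by fixed constants (Lemmas~\ref{QDlem12}, \ref{Qlem3}, \ref{QDlem77}), so the per-step factor is in general strictly below $1$ and iteration drives the bound to zero. Your observation that concatenating levels only worsens matters is also right. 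The paper's sketch does not confront this point.

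Where your proposal breaks down is the remedy. Passing to a threshold antichain $\Lambda(\epsilon)$ does not improve the H\"older factor: $\sum_{w\in\Lambda}(p_wR_w^r)^{s}$ lies between the \emph{same} fixed constants for every finite maximal antichain, so the growth of $\text{Card}(\Lambda(\epsilon))$ you appeal to is already absorbed by the sum and contributes nothing extra to the displayed inequality. Consequently ``reverse-H\"older and induction bookkeeping'' with a scale-dependent $\Lambda$ still loses a fixed factor at each step, and your sentence about the cardinality growth ``offsetting the decay of the constant'' does not hold. The actual cure is to abandon iteration and argue in one shot: choose $\epsilon$ so that $\text{Card}(\Lambda(\epsilon))$ is comparable to $n$, use a counting argument to force a definite proportion of the $w\in\Lambda(\epsilon)$ to carry only boundedly many points of the optimal set, and bound those contributions directly by finitely many base values of $u_{k,r}$. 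This is the mechanism behind the references the paper cites for the conformal case (\cite{Zhu}, \cite{R5}, \cite{R6}), and it sidesteps both the constant loss and the $n_0(\Lambda(\epsilon))$ compatibility issue you flagged. In short: your diagnosis is sharper than the paper's sketch, but the repair you outline would not close the gap.
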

\begin{proof}
The proof of the proposition can be obtained by using Lemma~\ref{QDlem77}, and the technique of the proof in Proposition \ref{Qlem7}.
\end{proof}

\subsection{Continuity of quantization dimension}
Let $\C P(X)$ denote the set of all Borel probability measures on the compact metric space $(X,d)$. Then,
\begin{equation*}
\begin{aligned}
d_H (\mu, \gn) :=\sup_{\te{Lip}(f)\leq 1} \Big\{\Big|\int_X f d\mu -\int_X fd\gn\Big|\Big\}, \ (\mu, \gn) \in \C M \times \C M, \end{aligned}
\end{equation*}
defines a metric on $\C P(X)$, where $\te{Lip}(f)$ denotes the Lipschitz constant of $f$. Then,
$(\C P(X), d_H)$ is a compact metric space (see \cite[Theorem~5.1]{B}).
Again, we know in the weak topology on $\C P(X)$,
\[\mu_n \to \mu  \Longleftrightarrow \int_X f d\mu_n - \int_X f d\mu \to 0 \te{ for all } f \in \C C(X),\]
where $\C C (X) : =\set{ f : X \to \D R : f \te{ is continuous}}$. It is known that the $d_H$-topology and the weak topology, coincide on the space of probabilities with compact support (see \cite{Mat}).  In our case all measures are compactly supported.

Let us now state and prove the following lemma.

\begin{lemma} \label{lemma45}
Let $\mu_n$ be the Borel probablity measure generated by the finite IFS give by $\{X; f_{n_1}, f_{n, 2}, \cdots, f_{n, N}\}$ associated with the probability vector $(p_{n,1}, p_{n, 2}, \cdots, p_{n,N})$. Let $\mu$ be the Borel probablity measure generated by the finite IFS given by $\{X; f_{1}, f_{2}, \cdots, f_{N}\}$ associated with the probability vector $(p_{1}, p_{2}, \cdots, p_{N})$, i.e.,
\[\mu_n=\sum_{j=1}^N p_{n, j}\mu_n\circ f_{n, j}^{-1}, \te{ and } \mu=\sum_{j=1}^N p_j \mu\circ f_j^{-1}.\]
Suppose that $f_{n, j} \to f_j$, and $p_{n, j} \to p_j$ for all $1\leq j\leq N$.
Then, $\mu_n\to \mu$ as $n\to \infty$.
\end{lemma}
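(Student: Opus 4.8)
The plan is to show that $\mu_n \to \mu$ in the weak topology by exploiting the contractivity of the IFS maps together with the uniqueness of the invariant measure, via an estimate on the $d_H$-distance. Since $(\mathcal P(X), d_H)$ is a compact metric space, it suffices to prove that every convergent subsequence of $(\mu_n)$ has limit $\mu$; equivalently, I will directly estimate $d_H(\mu_n, \mu)$ and show it tends to $0$.

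First I would record the two contraction-type facts I need. Write $s = \max\{c_1, \dots, c_N\} < 1$ for the common contraction bound of the limit IFS (and note that, since $f_{n,j}\to f_j$ uniformly, for $n$ large the $f_{n,j}$ are uniformly contractive with a ratio $s' < 1$ as well, so we may assume a uniform ratio $s'' < 1$ for all $n$ large). Next, for a $1$-Lipschitz test function $\varphi$ with $\mathrm{Lip}(\varphi)\le 1$, I would compare $\int \varphi\, d\mu_n$ and $\int \varphi\, d\mu$ by inserting the invariance relations: using $\mu = \sum_j p_j\, \mu\circ f_j^{-1}$ and $\mu_n = \sum_j p_{n,j}\, \mu_n\circ f_{n,j}^{-1}$,
\begin{equation*}
\Big|\int \varphi\, d\mu_n - \int \varphi\, d\mu\Big|
= \Big| \sum_{j=1}^N \Big( p_{n,j}\!\int \varphi\circ f_{n,j}\, d\mu_n - p_j\!\int \varphi\circ f_j\, d\mu \Big)\Big|.
\end{equation*}
Each summand I would split as $p_{n,j}\int(\varphi\circ f_{n,j} - \varphi\circ f_j)\,d\mu_n + p_{n,j}\int \varphi\circ f_j\, (d\mu_n - d\mu) + (p_{n,j}-p_j)\int \varphi\circ f_j\, d\mu$. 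The first piece is bounded by $p_{n,j}\sup_{x}d(f_{n,j}(x), f_j(x)) =: p_{n,j}\,\delta_{n,j}$ (using $\mathrm{Lip}(\varphi)\le1$), which $\to 0$ by hypothesis; the third is bounded by $|p_{n,j}-p_j|\cdot\mathrm{diam}(X)$ (up to an additive constant from $\varphi$, which cancels in $d_H$ anyway since $d_H$ is unchanged by adding constants to $\varphi$), hence $\to0$; and the middle piece is $p_{n,j}$ times $\int \psi\,(d\mu_n - d\mu)$ where $\psi = \varphi\circ f_j$ has $\mathrm{Lip}(\psi)\le s < 1$, so $|\int\psi\,(d\mu_n-d\mu)| \le s\, d_H(\mu_n,\mu)$.

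Summing over $j$ and taking the supremum over all $1$-Lipschitz $\varphi$ gives an inequality of the form $d_H(\mu_n,\mu) \le \varepsilon_n + s\, d_H(\mu_n,\mu)$, where $\varepsilon_n = \sum_j (p_{n,j}\delta_{n,j} + |p_{n,j}-p_j|\,\mathrm{diam}(X)) \to 0$. Since $s<1$ and $d_H(\mu_n,\mu) \le \mathrm{diam}(X) < \infty$, we may absorb the last term: $d_H(\mu_n,\mu) \le \varepsilon_n/(1-s) \to 0$. Finally, since $d_H$-convergence and weak convergence coincide on compactly supported measures (as noted in the excerpt, via \cite{Mat}), this gives $\mu_n \to \mu$ weakly. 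The one point requiring care — and the main obstacle — is the bookkeeping in the middle term: one must be sure that the contraction ratio $s$ genuinely multiplies $d_H(\mu_n,\mu)$ and that additive constants in $\varphi$ (which blow up naively) are handled, which is why I would first normalize each $1$-Lipschitz $\varphi$ by subtracting $\varphi(x_0)$ for a fixed basepoint $x_0$, so that $\|\varphi\|_\infty \le \mathrm{diam}(X)$ uniformly; this normalization does not change $d_H$ and makes every bound above uniform in $\varphi$.
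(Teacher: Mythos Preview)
Your argument is correct and, while it starts from the same idea as the paper (estimate $d_H(\mu_n,\mu)$ by inserting the invariance relations for $\mu_n$ and $\mu$ and then splitting term by term), the execution differs in an essential way. You use a three-term decomposition of each summand and then absorb the middle piece $p_{n,j}\int(\varphi\circ f_j)\,(d\mu_n-d\mu)$ via the contraction ratio $s<1$, obtaining $d_H(\mu_n,\mu)\le \varepsilon_n/(1-s)$; this is exactly the classical Hutchinson contraction step. The paper instead writes only a two-term split,
\[
p_{n,j}\!\int\!\big((g\circ f_{n,j})-(g\circ f_j)\big)\,d\mu_n \;+\; (p_{n,j}-p_j)\!\int\!(g\circ f_j)\,d\mu,
\]
and asserts this equals $p_{n,j}\int(g\circ f_{n,j})\,d\mu_n - p_j\int(g\circ f_j)\,d\mu$; but that identity is false --- the cross term $p_{n,j}\int(g\circ f_j)\,(d\mu_n-d\mu)$ is simply missing --- so the paper's displayed chain does not close as written. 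Your normalization $\varphi\mapsto\varphi-\varphi(x_0)$ also handles a point the paper glosses over: its bound $|g\circ f_j|\le M_*$ is obtained for each fixed $g$ by compactness, but $M_*$ is not a priori uniform over all $1$-Lipschitz $g$ unless one first normalizes. In short, your route is the standard and rigorous one; it repairs the algebra and makes explicit where the contractivity of the IFS is actually used.
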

\begin{proof}
We have
\begin{equation*}
\begin{aligned}
& d_H(\mu_n, \mu) =\sup_{\te{Lip}(g)\leq 1} \Big|\int_X g d\mu_n -\int_X g d\mu\Big|\\
&=\sup_{\te{Lip}(g)\leq 1} \Big| \sum_{j=1}^N  p_{n, j}\int_X (g\circ f_{n, j}) d\mu_n- \sum_{j=1}^N  p_j \int_X (g\circ f_j) d\mu\Big|\\
&\leq \sum_{j=1}^N \sup_{\te{Lip}(g)\leq 1} \Big| p_{n, j}\int_X (g\circ f_{n, j}) d\mu_n-p_j \int_X (g\circ f_j) d\mu\Big|\\
&=\sum_{j=1}^N \sup_{\te{Lip}(g)\leq 1} \Big| p_{n, j}\int_X ((g\circ f_{n, j})-(g\circ f_j)) d\mu_n+(p_{n, j}-p_j) \int_X (g\circ f_j) d\mu\Big|\\
&\leq \sum_{j=1}^N \sup_{\te{Lip}(g)\leq 1} \Big| p_{n, j}\int_X ((g\circ f_{n, j})-(g\circ f_j)) d\mu_n\Big|+\sum_{j=1}^N \sup_{\te{Lip}(g)\leq 1}\Big|(p_{n, j}-p_j) \int_X (g\circ f_j) d\mu\Big|\\
&\leq \sum_{j=1}^N \sup_{\te{Lip}(g)\leq 1} p_{n, j}\int_X |(g\circ f_{n, j})-(g\circ f_j)| d\mu_n+\sum_{j=1}^N \sup_{\te{Lip}(g)\leq 1}|(p_{n, j}-p_j| \Big|\int_X (g\circ f_j) d\mu\Big|.
\end{aligned}
\end{equation*}
Since for all $1\leq j\leq N$, $g\circ f_j$ are continuous functions on a compact set, there exists a constant $M_*>0$, such that $|g\circ f_j|\leq M_*$ for all $1\leq j\leq N$.
Since $f_{n, j} \to f_j$, and $p_{n, j} \to p_j$, for any given $\ep>0$, as $\frac \ep{1+NM_*}>0$, there exists a positive integer $\tilde N$, depending on $\ep$ only, such that for all $n\geq \tilde N$, we have
\[
|f_{n, j}-f_j|<\frac{\ep}{1+NM_*}, \te{ and } |p_{n, j}-p_j|<\frac \ep {1+NM_*},
\]
for all $1\leq j\leq N$.
Again, if $\te{Lip}(g)\leq 1$, then for all $n\geq \tilde N$, we have $|(g\circ f_{n, j})-(g\circ f)|\leq |f_{n, j}-f_j|<\frac{\ep}{1+NM_*}$.
Thus, for the above $\ep$, and $\tilde N$, if $n\geq \tilde N $, we have
\begin{equation*}
\begin{aligned}
&d_H(\mu_n, \mu) <\sum_{j=1}^N p_{n, j}\frac{\ep}{1+NM_*}+\sum_{j=1}^N \frac{\ep}{1+NM_*} M_*=\frac{\ep}{1+NM_*}(1+NM_*)=\ep.
\end{aligned}
\end{equation*}
Hence, $\mu_n\to \mu$ as $n\to \infty$. Thus, the proof of the lemma is complete.
\end{proof}
 Recall that $D_r(\mu)$ denotes the quantization dimension of order $r$, for $0<r<+\infty$, of a Borel probability measure $\mu$, and $(Df)$ represents the infinitesimal similitude of a function $f$. Let us now give the following continuity property of quantization dimensions.
\begin{proposition}
Let $\mu_n$, and $\mu$ be the Borel probablity measures given by the IFSs as given in Lemma~\ref{lemma45}, i.e.,
\[
\mu_n=\sum_{j=1}^N p_{n, j}\mu_n\circ f_{n, j}^{-1}, \te{ and } \mu=\sum_{j=1}^N p_j \mu\circ f_j^{-1}.
\]
Suppose that all IFSs are as in Theorem \ref{QDmainthm} such that $f_{n, j} \to f_j$, $p_{n, j} \to p_j$, and $(Df_{n,j})\to (Df_j)$ as $n \to \infty$ for all $1\leq j\leq N$.
Then, $D_r(\mu_n) \to D_r(\mu)$ as $n\to \infty$ provided the SOSC is satisfied by all IFSs.
\end{proposition}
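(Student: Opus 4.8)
The plan is to reduce the statement to the convergence of the spectral radii of the associated transfer operators, and then to exploit the strict monotonicity in Lemma~\ref{QDstrdec}. For $\sigma\ge 0$ let
\[
(L_{\sigma,r}^{(n)}\Phi)(x)=\sum_{j=1}^N\big(p_{n,j}(Df_{n,j})(x)^r\big)^{\frac{\sigma}{r+\sigma}}\Phi(f_{n,j}(x)),\qquad
(L_{\sigma,r}\Phi)(x)=\sum_{j=1}^N\big(p_{j}(Df_{j})(x)^r\big)^{\frac{\sigma}{r+\sigma}}\Phi(f_{j}(x))
\]
be the operators attached to the $n$-th IFS and to the limiting IFS. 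By Theorem~\ref{QDmainthm}, $D_r(\mu_n)=\sigma_r^{(n)}$ and $D_r(\mu)=\sigma_r$, where $\sigma_r^{(n)}$ and $\sigma_r$ are the unique numbers (provided by Lemma~\ref{QDstrdec}, applicable since every IFS here satisfies the hypotheses of Theorem~\ref{QDmainthm}) with $\text{spr}(L_{\sigma_r^{(n)},r}^{(n)})=1$ and $\text{spr}(L_{\sigma_r,r})=1$. It therefore suffices to prove that $\sigma_r^{(n)}\to\sigma_r$, which I will do by first proving that $\text{spr}(L_{\sigma,r}^{(n)})\to\text{spr}(L_{\sigma,r})$ for each fixed $\sigma\ge 0$.

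The first ingredient is a Perron--Frobenius sandwich valid uniformly across the IFSs. For any one of the operators above, call it $L$, \cite[Theorem~$3.6$ and Lemma~$4.8$]{Nussbaum1} furnish a strictly positive continuous eigenfunction $h$ with $Lh=\text{spr}(L)\,h$. Since $h$ lies between two positive constants on the compact space $X$ and $L$ is a positive linear operator, applying $L^k$ to $(\max h)^{-1}h\le\mathbf{1}\le(\min h)^{-1}h$ gives $\tfrac{\min h}{\max h}\,\text{spr}(L)^k\le (L^k\mathbf{1})(x)\le\tfrac{\max h}{\min h}\,\text{spr}(L)^k$ for all $x$, so both $\big(\min_{x\in X}(L^k\mathbf{1})(x)\big)^{1/k}$ and $\|L^k\mathbf{1}\|_\infty^{1/k}$ tend to $\text{spr}(L)$; combining this with the facts that $k\mapsto\|L^k\mathbf{1}\|_\infty$ is submultiplicative and $k\mapsto\min_{X}L^k\mathbf{1}$ is supermultiplicative (positivity of $L$ and $L\mathbf{1}>0$), Fekete's lemma yields
\[
\big(\min_{x\in X}(L^k\mathbf{1})(x)\big)^{1/k}\ \le\ \text{spr}(L)\ \le\ \|L^k\mathbf{1}\|_\infty^{1/k}\qquad\text{for every }k\ge1 .
\]
The point is that these two bounds involve no constant depending on the IFS.

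Next I would establish pointwise (in fact uniform) convergence of finite iterates. Fix $\sigma\ge0$ and $k\ge1$. By Lemma~\ref{Chain_lemma}, $(L_{\sigma,r}^{(n)})^k\mathbf{1}(x)=\sum_{|w|=k}\big(p_w^{(n)}(Df_w^{(n)})(x)^r\big)^{\frac{\sigma}{r+\sigma}}$, a finite sum over the $N^k$ words of length $k$. For each such $w$ the composition $f_{w_1}^{(n)}\circ\cdots\circ f_{w_k}^{(n)}$ converges uniformly on $X$ to $f_{w_1}\circ\cdots\circ f_{w_k}$ (uniform convergence of continuous self-maps of the compact $X$ is preserved under composition, the limit maps being uniformly continuous); then, using $(Df_{w_j}^{(n)})\to(Df_{w_j})$ uniformly together with the uniform continuity of $(Df_{w_j})$ and the bound $(Df_{w_j}^{(n)})\le 1$ from Remark~\ref{Qrem2}, a telescoping estimate for the product gives $(Df_w^{(n)})\to(Df_w)$ uniformly on $X$; with $p_w^{(n)}\to p_w$ this yields $(L_{\sigma,r}^{(n)})^k\mathbf{1}\to(L_{\sigma,r})^k\mathbf{1}$ uniformly on $X$. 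Feeding this into the sandwich applied to $L_{\sigma,r}^{(n)}$, letting $n\to\infty$ with $k$ fixed and then $k\to\infty$, produces $\text{spr}(L_{\sigma,r}^{(n)})\to\text{spr}(L_{\sigma,r})$ for every fixed $\sigma\ge 0$.

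To conclude, fix $\ep>0$ with $\sigma_r-\ep>0$ (the boundary case $\sigma_r=0$ being handled one-sidedly). Strict monotonicity of $\sigma\mapsto\text{spr}(L_{\sigma,r})$ through the value $1$ at $\sigma_r$ gives $\text{spr}(L_{\sigma_r-\ep,r})>1>\text{spr}(L_{\sigma_r+\ep,r})$; by the convergence just proved the same strict inequalities hold for $L_{\sigma_r\pm\ep,r}^{(n)}$ once $n$ is large, and since $\sigma\mapsto\text{spr}(L_{\sigma,r}^{(n)})$ is itself continuous, strictly decreasing, and equal to $1$ exactly at $\sigma_r^{(n)}$ (Lemma~\ref{QDstrdec}), this forces $\sigma_r-\ep<\sigma_r^{(n)}<\sigma_r+\ep$ for all large $n$; hence $\sigma_r^{(n)}\to\sigma_r$, i.e. $D_r(\mu_n)\to D_r(\mu)$. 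The main obstacle is that $L_{\sigma,r}^{(n)}\to L_{\sigma,r}$ holds only in the strong operator topology and not in operator norm — the unit ball of $\C C(X)$ is not equicontinuous, so $\sup_{\|\Phi\|\le1}\|(L_{\sigma,r}^{(n)}-L_{\sigma,r})\Phi\|_\infty$ need not vanish — so one cannot simply invoke norm-continuity of the spectral radius; the Perron--Frobenius sandwich is exactly the device that upgrades the easy convergence of the iterates $(L_{\sigma,r}^{(n)})^k\mathbf{1}$ to convergence of the spectral radii, with the $n$-dependent comparison constants $\min h/\max h$ washing out inside the $k$-th root. A secondary technical point requiring care is the uniform convergence $(Df_w^{(n)})\to(Df_w)$, which relies on compactness of $X$ and on the uniform continuity of the limiting data $(Df_j)$ and $f_j$.
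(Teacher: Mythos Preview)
Your argument is correct and follows the same overall architecture as the paper: reduce $D_r(\mu_n)\to D_r(\mu)$ to $\sigma_r^{(n)}\to\sigma_r$ via Theorem~\ref{QDmainthm}, establish $\text{spr}(L_{\sigma,r}^{(n)})\to\text{spr}(L_{\sigma,r})$ for each fixed $\sigma$, and then use the strict monotonicity of Lemma~\ref{QDstrdec} to pin down the limit. The genuine difference lies in how the spectral-radius convergence is obtained. The paper simply invokes \cite[Lemma~3.2]{AP} as a black box, whereas you supply a self-contained proof: the Perron--Frobenius eigenfunction yields the sandwich $(\min_X L^k\mathbf{1})^{1/k}\le\text{spr}(L)\le\|L^k\mathbf{1}\|_\infty^{1/k}$ uniformly in the IFS (via Fekete), and the elementary uniform convergence $(L_{\sigma,r}^{(n)})^k\mathbf{1}\to(L_{\sigma,r})^k\mathbf{1}$ for each fixed $k$ then forces $\text{spr}(L_{\sigma,r}^{(n)})\to\text{spr}(L_{\sigma,r})$. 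This buys you independence from the external reference and makes transparent exactly which convergences on the data are actually used; it also sidesteps the issue, which you correctly flag, that operator-norm convergence fails here. The paper's citation is shorter but opaque. Your final monotonicity step is phrased directly rather than by contradiction as in the paper, but the content is identical.
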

\begin{proof}
 Let $0<r< \infty,~~\sigma \ge 0$ and $n \in \mathbb{N}.$ Define $L_{n,\sigma,r}: \mathcal{C}(X) \rightarrow \mathcal{C}(X)$ by
 \[
 (L_{n,\sigma,r}\Phi)(x)= \sum_{i=1}^{N}\big(p_i~(Df_{n,i})(x)^r\big)^{\frac{\sigma}{r + \sigma}} \Phi(f_{n,r}(x)).
 \]
 By Theorem \ref{QDmainthm}, we have $D_r(\mu_n)= \sigma_{n,r} $ and $D_r(\mu)= \sigma_{r} $, where the numbers $\sigma_{n,r}$ and $\sigma_{r}$ are determined by $spr(L_{n,\sigma_{n,r},r})=1$ and $spr(L_{\sigma_{r},r})=1$ respectively. It suffices to prove that $\sigma_r = \lim_{n \to \infty}\sigma_{n,r}.$ We show it by contradiction. Assume that $\sigma_r \ne \lim_{n \to \infty}\sigma_{n,r}.$ This implies that there exist $\ep_0$ and a subsequence $(n_k)$ such that $\sigma_{n_k,r} > \sigma_r + \ep_0 ~\forall ~k \in \mathbb{N}$ or $\sigma_{n_k,r} < \sigma_r - \ep_0, ~\forall~ k \in \mathbb{N}$. Let us tackle the first condition of the previous line. Then, Lemma \ref{QDstrdec} yields  $$spr(L_{n_k,\sigma_{n_k,r},r})< spr(L_{n_k,\sigma_{r}+\ep_0,r}), ~~\forall~~k \in \mathbb{N}.$$ Recall that under the hypotheses of this theorem, the conclusion of \cite[Lemma $3.2$]{AP} holds, that is,  $\lim_{n \to \infty} spr(L_{n,\sigma,r}) = spr(L_{\sigma,r}).$ Again, by Lemma \ref{QDstrdec} and \cite[Lemma $3.2$]{AP},
 \[
 \lim_{k \to \infty} spr(L_{n_k,\sigma_{r}+\ep_0,r}) = spr(L_{\sigma_{r}+\ep_0,r}) < spr(L_{\sigma_{r},r})=1.
 \]
 Since $spr(L_{n_k,\sigma_{n_k,r},r})=1$ for all $k \in \mathbb{N},$ we have $spr(L_{\sigma_{r}+\ep_0,r}) \ge 1,$ which leads to a contradiction. In the similar lines, we also get a contraction for the other case. Thus, $\sigma_r = \lim_{n \to \infty}\sigma_{n,r}.$
\end{proof}

 In general, $\mu_n \to \mu$ does not imply that $D_r(\mu_n) \to D_r(\mu)$ as $n\to \infty$. In this regard, we give few examples.

\begin{example}
Let $\delta_t$ be the Dirac measure at $t \in \mathbb{R}$, and $\mathcal{L}^1$ denotes one-dimensional Lebesgue measure on $\mathbb{R}.$
Define a sequence of measures $(\mu_n)$ as follows: $\mu_n= \frac{1}{n} \sum_{i=1}^n \delta_{i/n}.$ It can be deduced that $\mu_n \to \mathcal{L}^1|_{[0,1]}.$ However, we show that $D_r(\mu_n)$ does not converge to $D_r\big(\mathcal{L}^1|_{[0,1]}\big).$ We proceed by defining an IFS $\mathcal{I}=\{\mathbb{R}; f_1, f_2\}$, where the mappings $f_i$ are defined as $$ f_1(x)= \frac{x}{2}, ~~~~ \quad ~~ f_2(x)=\frac{x}{2} +\frac{1}{2} \te{ for all } x\in \mathbb R.$$
For the probability vector $(p_1,p_2)=(\frac{1}{2},\frac{1}{2})$, there is a unique measure such that $\mu = p_1 \mu \circ f_1^{-1}+  p_2 \mu \circ f_2^{-1}.$ One observes that $\mu =\mathcal{L}^1|_{[0,1]}.$ Since the IFS $\mathcal{I}$ satisfies the strong open set condition, we deduce that $D_r\big(\mathcal{L}^1|_{[0,1]}\big)=1$ for $0<r<+\infty.$ On the other hand, since each $\mu_n$ has support $\set{1, \frac 12, \cdots, \frac 1 n}$, for any $m\geq n$, we have
\[V_{m, r}(\mu_n)=\inf \Big\{\int d(x, A)^r d\mu_n(x): A \subset X, \, \text{Card}(A) \le m\Big\}=0,\]
by taking $A=\set{1, \frac 12, \cdots, \frac 1 n}$, and hence, $D_r(\mu_n)=0$ for all $n\in \D N$, i.e., $D_r(\mu_n)$ does not converge to $D_r(\mu)$.
\end{example}
\begin{example}
Here we consider a sequence of IFSs $\mathcal{I}_n=\{\mathbb{R};f_{n,1},f_{n,2}\}$, where the mappings involved are defined as $$f_{n,1}(x)=\frac{x}{2n}, ~~\quad~~ f_{n,2}(x)=\frac{x}{2n}+\frac{1}{2n} \te{ for all } x\in \mathbb R. $$
It follows that $(Df_{n,i})(x)= \frac{1}{2n}, ~~\forall~~ x \in \mathbb{R},~~ i=1,2.$ Further, we assume an IFS $\mathcal{I}=\{\mathbb{R};f_{1},f_{2}\}$ with $f_1(x)=f_2(x)=0,~ \forall~~x \in \mathbb{R}.$ With a sequence of probability vectors $(p_{n,1},p_{n,2})=(\frac{1}{2},\frac{1}{2})$, and a probability vector $(p_1,p_2)=(\frac{1}{2},\frac{1}{2}),$ we have $f_{n, i} \to f_i$, $p_{n, i} \to p_i$, and $(Df_{n,i})\to (Df_i)$ as $n \to \infty$ for $i=1,2$. Note that the sequence of invariant measures $(\mu_n)$ associated with $\mathcal{I}_n$ converges to the invariant measure $\mu$ which is associated with $\mathcal{I}.$ Since $\mathcal{I}_n$ satisfy the SOSC, we deduce that $D_r(\mu_n)= \frac{\ln(2)}{\ln(2)+\ln(n)} \to D_r(\mu)=0.$ In this example, the limit IFS $\mathcal{I}$ does not satisfy the SOSC.
\end{example}



\subsection*{Acknowledgements}
   The second author expresses his gratitude to the University Grants
   Commission (UGC), India, for financial support.
\bibliographystyle{amsplain}

\end{document}